\newcommand\nthalias[1]{\AddToHook{env/#1/begin}{\crefalias{lemma}{#1}}}
\crefname{section}{Section}{Sections}
\crefname{subsection}{\S}{\S\S}
\crefname{subsubsection}{\S}{\S\S}
\theoremstyle{plain}
\newtheorem{lemma}{Lemma}[section]
\newtheorem{proposition}[lemma]{Proposition}
\newtheorem{corollary}[lemma]{Corollary}
\newtheorem{theorem}[lemma]{Theorem}
\theoremstyle{plain}
\newtheorem{theoremN}{Theorem}
\theoremstyle{plain}
\newtheorem{definition}[lemma]{Definition}
\newtheorem{example}[lemma]{Example}
\newtheorem{remark}[lemma]{Remark}
\newtheorem{remarks}[lemma]{Remarks}
\newtheorem{notation}[lemma]{Notation}
\crefname{definition}{definition}{definitions}
\crefname{ex}{example}{examples}
\crefname{exs}{example}{examples}
\crefname{remark}{remark}{remarks}
\crefname{remarks}{remark}{remarks}
\crefname{convention}{convention}{conventions}
\crefname{notation}{notation}{notations}
\crefname{table}{table}{tables}
\crefname{lemma}{lemma}{lemmas}
\crefname{proposition}{proposition}{propositions}
\crefname{propositionN}{proposition}{propositions}
\crefname{corollary}{corollary}{corollaries}
\crefname{corollaryN}{corollary}{corollaries}
\crefname{theorem}{theorem}{theorems}
\crefname{theoremN}{theorem}{theorems}
\crefname{enumi}{}{}
\crefname{assumption}{assumption}{Assumptions}
\crefname{construction}{construction}{Constructions}
\crefname{question}{question}{Questions}
\crefname{equation}{}{}
\numberwithin{equation}{section}
\renewcommand{\theequation}{\thesection-\arabic{equation}}
\theoremstyle{nonumberplain}
\newtheorem{proof}{Proof}
\newcommand\pf[1]{\newtheorem{#1}{Proof of \Cref{#1}}}
\newcommand\bC{{\mathbb C}}
\newcommand\bD{{\mathbb D}}
\newcommand\bP{{\mathbb P}}
\newcommand\bQ{{\mathbb Q}}
\newcommand\bR{{\mathbb R}}
\newcommand\bS{{\mathbb S}}
\newcommand\bT{{\mathbb T}}
\newcommand\bZ{{\mathbb Z}}
\newcommand\cA{{\mathcal A}}
\newcommand\cE{{\mathcal E}}
\newcommand\cF{{\mathcal F}}
\newcommand\fm{{\mathfrak m}}
\DeclareMathOperator{\lcm}{lcm}
\DeclareMathOperator{\End}{\mathrm{End}}
\DeclareMathOperator{\Max}{\mathrm{Max}}
\newcommand\numberthis{\addtocounter{equation}{1}\tag{\theequation}}
\newcommand{\cat}[1]{\textsc{#1}}
\newcommand{\qedhere}{\mbox{}\hfill\ensuremath{\blacksquare}}
\newcommand{\xrightarrowdbl}[2][]{%
  \xrightarrow[#1]{#2}\mathrel{\mkern-14mu}\rightarrow
}
\title{Polynomial identities and Azumaya loci for rational quantum spheres}
\author{Alexandru Chirvasitu}
\begin{document}

\date{}

\newcommand{\Addresses}{{
  \bigskip
  \footnotesize

  \textsc{Department of Mathematics, University at Buffalo}
  \par\nopagebreak
  \textsc{Buffalo, NY 14260-2900, USA}  
  \par\nopagebreak
  \textit{E-mail address}: \texttt{achirvas@buffalo.edu}


}}

\maketitle

\begin{abstract}
  We prove a number of structure and isomorphism results concerning the non-commutative Natsume-Olsen spheres $\mathbb{S}^{2n-1}_{\theta}$ deformed along a skew-symmetric matrix $\theta\in \mathbb{R}$. These include (a) the fact that two $C^*$-algebras of the form $\mathbb{S}^{3}_{\theta}\otimes M_n$ are isomorphic precisely in the obvious cases; (b) the fact that $m$ and $n$ are recoverable from the isomorphism class of $C(\mathbb{S}^{2m-1}_{\theta})\otimes M_n$; (c) the PI character, PI degree and Azumaya loci of $C(\mathbb{S}^{2m-1}_{\theta})$ for rational $\theta$, along with a realization of their centers as (function algebras of) branched cover of $\mathbb{S}^{2n-1}$ and (d) for rational $\theta$ again, the topological finite generation of $C(\mathbb{S}^{2m-1}_{\theta})$ over their centers, with algebraic finite generation equivalent to being classical (equivalently, Azumaya).
\end{abstract}

\noindent \emph{Key words:
  Azumaya algebra;
  Chern class;
  PI algebra;
  bundle;
  classifying space;
  non-commutative sphere;
  non-commutative torus;
  projectively flat
}

\vspace{.5cm}

\noindent{MSC 2020: 46L52; 16H05; 46M20; 55R25; 55R37; 46L85; 16R10; 55R40


}


\section*{Introduction}

Consider a skew-symmetric real matrix $\theta\in M_n(\bR)$. We will be working extensively with the following non-commutative-geometric constructs.
\begin{itemize}[wide]
\item The \emph{non-commutative} (or \emph{quantum}) \emph{tori} $\bT^n_{\theta}$ (\cite[\S 1]{rief_case}, \cite[\S 12.2]{gvf_ncg}, \cite[\S 1.1.5]{khal_basic}, etc.) defined as objects dual to the generator-and-relation $C^*$-algebras
  \begin{equation*}
    A_{\theta}^n
    =
    C\left(\bT^n_{\theta}\right)
    :=
    \Braket{\text{unitaries $u_j$},\ j\in [n]:=\{1..n\}\ \mid\ u_k u_j = e(\theta_{jk})u_j u_k}    
  \end{equation*}
  for $e(-):=\exp(2\pi i -)$;

\item and the analogously-defined \emph{non-commutative spheres} \cite[Definition 2.1]{no_sph} defined by
  \begin{equation*}
    C\left(\bS_{\theta}^{2n-1}\right)
    :=
    \Braket{\text{normal }t_j
      ,\ j\in [n]
      \ \bigg|\
      \begin{aligned}
        t_k t_j &= e(\theta_{jk})t_j t_k\\
        \sum t_j^* t_j &= 1
      \end{aligned}
      }.
  \end{equation*}
\end{itemize}
The former ``glue'' to produce the latter in ways reminiscent of classical topology (e.g. the \emph{standard genus-1 (Heegaard) splitting} $\bS^3=\left(\bT^2\times [0,1]\right)\cup_{\bT^2}\left(\bT^2\times [0,1]\right)$ of \cite[Proposition 3.3]{MR1886684}): per \cite[Theorem 2.5]{no_sph}, we have
\begin{equation}\label{eq:sph2tor}
  C\left(\bS^{2n-1}_{\theta}\right)
  \cong
  \mathrm{Cont}_{\partial}
  \left(
    \bS_+^{n-1}\xrightarrow{\quad}C\left(\bT^n_{\theta}\right)
  \right),
\end{equation}
where
\begin{equation}\label{eq:bdry.cond}
  \begin{aligned}
    \bS_+^{n-1}
    &:=
      \left\{(s_1,\ \cdots,\ s_n)\in \bS^{n-1}\subseteq \bR^n\ :\ s_i\ge 0\right\}\\
    \bS^{n-1}_{+,F\subseteq [n]}
    &:=
      \left\{(s_i)\in \bS_+^{n-1}\ :\ s_i=0,\ \forall i\not\in F\right\}
      \quad\text{and `$\partial$' means}\\
    f\left(\bS^{n-1}_{+,F}\right)
    &\subseteq
      C^*\left(u_i,\ i\in F\right)
      \subseteq A_{\theta}^n
      ,\quad\forall F\subseteq [n].
  \end{aligned}
\end{equation}

\Cref{se:iso.pb} is concerned with reconstruction/isomorphism problems, i.e. the extent to which initial data (the deformation parameter $\theta\in M_n(\bR)$, or perhaps its size $n$) are determined by the isomorphism or stable isomorphism classes of the quantum-sphere algebras. A paraphrased aggregate of \Cref{th:theta3sphiso,th:recovermatsize} reads as follows. 

\begin{theoremN}\label{thn:iso.cls}
  \begin{enumerate}[(1),wide]
  \item For $n,n'\in \bZ_{>0}$ and $\theta,\theta'\in \bR$ identified with skew-symmetric $2\times 2$ matrices we have
    \begin{equation*}
      \begin{aligned}
        C(\bS^3_{\theta})\otimes M_n
        \cong
        C(\bS^3_{\theta'})\otimes M_{n'}
        &\iff
          C(\bT^2_{\theta})\otimes M_n
          \cong
          C(\bT^2_{\theta'})\otimes M_{n'}\\
        &\iff
          n=n'
          \quad\text{and}\quad
          \theta\in \pm\theta'+\bZ.
      \end{aligned}      
    \end{equation*}

  \item For positive integers $m,n$ with $m\ge 2$ and skew-symmetric $\theta\in M_m(\bR)$ the isomorphism class of either $C(\bT^{m}_{\theta})\otimes M_n$ or $C(\bS^{2m-1}_{\theta})\otimes M_n$ determines $m$ and $n$.  \qedhere
  \end{enumerate}
\end{theoremN}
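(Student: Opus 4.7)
The plan is to dispatch the chain of equivalences in (1) from the right while handling Part (2) via $K$-theoretic and dimension-theoretic invariants. The reverse implications in (1) are immediate: if $n=n'$ and $\theta\in \pm\theta'+\bZ$, then the generators of $C(\bT^2_\theta)$ and $C(\bT^2_{\theta'})$ satisfy matching relations after swapping $u_i\leftrightarrow u_i^*$ (for the sign flip) or translating $\theta$ by an integer (which leaves $e(\theta)$ invariant), yielding the torus isomorphism, and the sphere isomorphism then follows by functoriality of the gluing formula \eqref{eq:sph2tor}. For the torus-stable-iso-to-parameters direction of (1), I would invoke the Rieffel--Elliott classification: $K_0(C(\bT^2_\theta)\otimes M_n)\cong \bZ^2$ with $[1]$ equal to $n$ times a primitive generator; the tracial state identifies $\tau_*(K_0)=\tfrac{1}{n}(\bZ+\theta\bZ)\subset \bR$ with $\tau_*([1])=1$, and this unital ordered-group invariant determines $n$ and $\theta$ modulo $\pm 1 + \bZ$ exactly as claimed.

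The sphere-to-torus implication in (1) is the main obstacle, and my plan is to reconstruct $C(\bT^2_\theta)\otimes M_n$ intrinsically from $C(\bS^3_\theta)\otimes M_n$ using the continuous-field description \eqref{eq:sph2tor}: the sphere algebra fibers canonically over the arc $\bS^1_+$ with generic fiber $C(\bT^2_\theta)$ and degenerate endpoint fibers $C(\bS^1)$, a structure preserved by tensoring with $M_n$. The primitive ideal space then carries a canonical stratification whose open dense stratum corresponds to the interior of $\bS^1_+$; each fiber there is either simple (irrational $\theta$) or Azumaya over its center (rational $\theta$), in both cases allowing extraction of $C(\bT^2_\theta)\otimes M_n$ up to Morita equivalence. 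Promoting this to an exact identification of the matrix tensor factor will require tracking the class $[1]\in K_0$ through the construction, which I expect to be the technically most delicate step.

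For Part (2), the torus case is purely $K$-theoretic: by Pimsner--Voiculescu (or the deformation invariance of topological $K$-theory), $K_0(C(\bT^m_\theta)\otimes M_n)\cong \bZ^{2^{m-1}}$ independently of $\theta$, with $[1]$ equal to $n$ times a primitive element. The free rank $2^{m-1}$, being injective in $m$, recovers $m$, while the torsion part $\bZ/n$ of $K_0/\bZ[1]$ recovers $n$. For the sphere one has $K_0(C(\bS^{2m-1}_\theta)\otimes M_n)\cong \bZ$ with $[1]=n$, hence $K_0/\bZ[1]\cong \bZ/n$ recovers $n$; to recover $m$, I would use the topological covering dimension of the algebra (which equals $2m-1$ independently of $\theta$ and the matrix factor), or alternatively extend the sphere-to-torus extraction from (1) to arbitrary $m$ and read $m$ off the $K_0$-rank of the resulting torus quotient.
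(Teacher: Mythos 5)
The reverse implications in (1) and the torus half of (2) are handled correctly, matching the paper's use of \cite[Theorem 2.2]{el_kproj}, but there are genuine gaps in the remaining steps.

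\textbf{Rational torus case of (1).} Your proposed invariant, the tracial range $\tau_*(K_0)=\tfrac{1}{n}(\bZ+\theta\bZ)$ together with $\tau_*([1])=1$, is not complete for rational $\theta$. If $\theta=p/q$ in lowest terms then $\bZ+\theta\bZ=\tfrac{1}{q}\bZ$, so $\tau_*(K_0)=\tfrac{1}{nq}\bZ$ determines only the product $nq$; the numerator $p$ (hence $\theta$ modulo $\pm1+\bZ$) is invisible to this invariant, and $n$ and $q$ are not separable either. The irrational case is fine (this is where Rieffel's classification by the trace applies), but the rational case requires a finer invariant. The paper instead exploits the Azumaya structure: an isomorphism of $A_{\theta,n}\cong A_{\theta',n'}$ gives a homeomorphism of the common center $C(\bT^2)$, then compares the characteristic $2$-classes $\beta_{qn}\in H^2(\bT^2,\bZ/qn)$ of the two matrix bundles, using that $[\bT^2,BSU(qn)]$ vanishes so that $\beta_{qn}$ is a complete invariant; the relation $\widetilde p_q n=\pm\widetilde{p'}_{q}n\bmod qn$ then unwinds to $\theta\in\pm\theta'+\bZ$.

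\textbf{Sphere-to-torus direction of (1).} You correctly identify this as the delicate step but leave it essentially open. The continuous-field structure over $\bS^1_+$ is extrinsic, and ``extraction up to Morita equivalence'' plus ``tracking $[1]$'' is where the whole difficulty lies. The paper's route is: an isomorphism \Cref{eq:ctnctn} induces an isomorphism of centers $C(\bS^3)$, then an isomorphism of the matrix bundles over the Azumaya locus $\bT^2\times(0,1)\simeq\bT^2$, after which the characteristic-class computation above does the rest. Without an argument of this type your plan does not close.

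\textbf{Part (2), sphere case.} Your claim $K_0(C(\bS^{2m-1}_\theta)\otimes M_n)\cong\bZ$ with $[1]=n$ is unsupported and likely incorrect in general (the $K$-theory of $\bS^{2n-1}_\theta$ is not simply $\bZ$, particularly for irrational $\theta$). The paper sidesteps $K$-theory of the sphere entirely: $n$ is recovered as the minimal dimension of an irreducible representation (the fibers at the vertices of $\bS^{m-1}_+$ are abelian), and $m$ is recovered by adding $\dim\Max(Z)=m-1+\mathrm{rank}\ker\theta$ to the minimal value $m-\mathrm{rank}\ker\theta$ of the ranks $\mathrm{rank}(\bZ^m/\Gamma_F)$ read off from the fiber $K_0$-groups $\bZ^{2^{r-1}}$. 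Your fallback of ``topological covering dimension of the algebra'' is not a well-defined notion for a noncommutative $C^*$-algebra and would need to be made precise before it could play this role.
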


In \Cref{se:high.dim} and onward we assume the deformation parameter $\theta\in M_n(\bQ)$ rational and examine consequent \emph{polynomial-identity (PI)} phenomena (on which background the main text offers a brief reminder). The focus is on
\begin{itemize}[wide]
\item \emph{Azumaya algebras} \cite[\S 4.4]{dcp_qg}: one way to formalize a purely algebraic analogue of the section-space $\Gamma\left(\cE\otimes \cE^*\right)$ for a vector bundle $\cE$ over a compact Hausdorff space;

\item and the extent to which algebras fail to qualify, as measured by the \emph{Azumaya locus} \cite[\S III.1.7]{bg_algqg}. 
\end{itemize}
Again summarizing for brevity, one rendition of \Cref{th:s2n1thetapi,th:ctheta.azumaya.loc} below is as follows. 

\begin{theoremN}\label{thn:pi}
  Let $\theta\in M_n(\bQ)$ be a skew-symmetric matrix for $n\in \bZ_{\ge 2}$.

  \begin{enumerate}[(1),wide]
  \item The center $Z_{\theta}\le C_{\theta}:=C(\bS_{\theta}^{2n-1})$ is the function algebra of a branched cover of $\bS^{2n-1}$ which is not, in general, a topological manifold.

  \item The smallest $n^2$ admitting a $Z_{\theta}$-algebra embedding $C_{\theta}\le M_n(Z_{\theta})$ is
    \begin{equation*}
      h_{\theta}
      :=
      [(\bZ^n+\mathrm{im}~\theta)\ :\ \bZ^n],
    \end{equation*}
    so in particular $C_{\theta}$ is a PI algebra of \emph{PI-degree} \cite[\S 1]{zbMATH04101395} $\sqrt{h_{\theta}}$.

  \item The Azumaya locus of $C_{\theta}$ consists precisely of those maximal ideals $p\in \Max(Z_{\theta})$ whose restriction to $\bS_+^{n-1}$ along \Cref{eq:sph2tor} belongs to some face supported by $F\subseteq [n]$ with
    \begin{equation*}
      h_{\theta}
      >
      h_{\theta,F}
      :=
      \text{$h$ attached to the sub-matrix of $\theta$ supported on $F$-indexed rows/columns}.
    \end{equation*}
    \qedhere
  \end{enumerate}
\end{theoremN}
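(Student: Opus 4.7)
The strategy is to exploit the continuous-field presentation \Cref{eq:sph2tor}, which exhibits $C_\theta$ as sections of a bundle over $\bS^{n-1}_+$ whose fiber over a point with support $F\subseteq[n]$ is the rational quantum torus $C(\bT^F_{\theta|_F})$ built from the principal submatrix $\theta|_F$. The first step is to record the standard rational-torus structure: $C(\bT^F_{\theta|_F})$ is free of rank $h_{\theta,F}$ over its center, which is the function algebra on the Pontryagin dual of the centrality lattice $L_{\theta|_F}:=\{a\in\bZ^F:\theta|_F a\in\bZ^F\}$, and $h_{\theta,F}=[\bZ^F:L_{\theta|_F}]$ is a perfect square with the torus Azumaya over its center of PI-degree $\sqrt{h_{\theta,F}}$. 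In particular $h_{\theta,F}\le h_\theta$, with equality on the top face $F=[n]$.

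For part~(1), I identify $Z_\theta$, via restriction of the continuous field to fiber centers, with the section algebra of the fibered family $C(\widehat{L}_{\theta|_F})$ subject to the analogue of the boundary conditions in \Cref{eq:bdry.cond}. Dualizing, $Z_\theta\cong C(X_\theta)$ for a compact Hausdorff $X_\theta$ glued from the Pontryagin-dual tori along the face relations, and the natural projection $X_\theta\to\bS^{2n-1}$ (the identity when $\theta=0$) is an $h_\theta$-sheeted branched cover, ramified exactly along those faces on which $h_{\theta,F}$ drops. A small explicit rational example (e.g.\ in $n=3$ with a strict index drop along a codimension-positive stratum) produces an $X_\theta$ that is not locally Euclidean near the ramification, giving the non-manifold claim. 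For part~(2), the PI-degree equals the supremum of the dimensions of simple $C_\theta$-modules: every such module has central character in $\Max(Z_\theta)$ and hence factors through some fiber torus, so the PI-degree is $\max_F\sqrt{h_{\theta,F}}=\sqrt{h_\theta}$, attained on the top face. The embedding $C_\theta\hookrightarrow M_{\sqrt{h_\theta}}(Z_\theta)$ is assembled from the (Brauer-trivial) Azumaya identifications of the fiber tori as matrix algebras of appropriate sizes over their centers, extended consistently across the boundary; minimality of $\sqrt{h_\theta}$ is immediate from the PI-degree.

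Part~(3) then follows from the standard characterization of the Azumaya locus: $p\in\Max(Z_\theta)$ lies in this locus exactly when the simple quotient $C_\theta/pC_\theta$ has the maximum possible dimension $h_\theta$ over the residue field. By the fibered analysis this quotient is the matrix algebra $M_{\sqrt{h_{\theta,F}}}$, where $F$ is the support of the image of $p$ in $\bS^{n-1}_+$; the Azumaya/non-Azumaya dichotomy is thus precisely $h_\theta=h_{\theta,F}$ versus $h_\theta>h_{\theta,F}$, as appears in the statement. The main obstacle throughout is the coherent gluing in part~(1): the fiber centers $C(\widehat{L}_{\theta|_F})$ are covers of $\bT^F$ of varying degrees, and one must describe carefully how sheets collapse when crossing into a lower-index face. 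Once this bookkeeping is pinned down, the matrix embedding of~(2) and the Azumaya characterization of~(3) follow near-mechanically from standard PI theory.
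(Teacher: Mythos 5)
Your overall architecture mirrors the paper's: identify $Z_\theta$ from the boundary-condition presentation \Cref{eq:sph2tor}, realize $X_\theta:=\Max(Z_\theta)$ as glued tori over the faces of the simplex, determine PI degree by maximizing fiber dimensions, and read off the Azumaya locus face-by-face. This is the right outline, and your parts (2) and (3) are essentially the paper's argument. However there are two concrete errors in your part (1) and one substantive gap that the paper specifically closes.

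First, the lattice governing the boundary restriction of $Z_\theta$ over a face $F$ is not your $L_{\theta|_F}=(\theta|_F)^\perp=\{a\in\bZ^F:\theta|_F a\in\bZ^F\}$ but rather $\theta^\perp\cap\bZ^F=\theta^\perp_{\downarrow F}$. The center of $C_\theta$ restricted to the face $F$ lands in $Z(A^n_\theta)\cap C^*(u_i,\ i\in F)=C^*(\mathbf{u}^{\theta^\perp_{\downarrow F}})$, which is typically a \emph{proper} subalgebra of $Z(C^*(u_i,\ i\in F))=C^*(\mathbf{u}^{(\theta|_F)^\perp})$: an $a\in\bZ^F$ may satisfy $\theta|_F a\in\bZ^F$ while $\theta a$ has non-integral entries outside $F$. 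The coend description of $X_\theta$ (compare \Cref{eq:xiscoend}) uses the groups $\Gamma_F=\theta^\perp\cap\bZ^F$, not $(\theta|_F)^\perp$, so as stated your gluing model for $X_\theta$ is off.

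Second, the claim that $X_\theta\to\bS^{2n-1}$ is an $h_\theta$-sheeted cover is false. Over the interior the degree is $[\theta^\perp:\langle q_ie_i\rangle]=\bigl(\textstyle\prod_i q_i\bigr)/h_\theta$, not $h_\theta$. In particular, when $\theta$ is in block-diagonal form \Cref{eq:ddmat} one has $\prod_i q_i = h_\theta$ and the cover is \emph{degree one}: $X_\theta\cong\bS^{2n-1}$ even for nontrivial $\theta$. (In \Cref{ex:s5.non.sph} the degree is $8/4=2$, not $h_\theta=4$.) Relatedly, your assertion that ramification occurs exactly where $h_{\theta,F}$ drops is not justified; the jump complex $\cat{jmp}_\theta$ governs the \emph{Azumaya} locus of $C_\theta$, but ramification of $X_\theta\to\bS^{2n-1}$ is controlled by the indices $[\theta^\perp_{\downarrow F}:\langle q_ie_i,\ i\in F\rangle]$, a different invariant.

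Finally, "the natural projection is a branched cover" requires an actual proof, and you give none. The paper's proof is not a direct topological verification; it exhibits a finite-index conditional expectation from $Z_\theta$ onto the $C(\bS^{2n-1})$-subalgebra generated by the central elements $u_i^{q_i}$, built from the expectations of \cite[Proposition 8.5]{pis_os} and compatible with the face structure, and then invokes \cite[Theorem 1.4]{bg_cx-exp}. That is the mechanism you would need to supply. Everything else (deferring the non-manifold phenomenon to an explicit $n=3$ example, the PI-degree computation via sharp $\sqrt{h_\theta}$-subhomogeneity, and the face-wise Azumaya criterion) matches the paper's approach and is sound modulo these corrections.
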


In \Cref{se:cent.fin} we turn to the intimate connection between polynomial identities and the condition that an algebra be finitely-generated as a module over its center (see e.g. \cite[Chapter 6]{proc_pi}, the motivating discussion on \cite[p.532]{zbMATH03351818}, etc.) in the context of studying the quantum spheres $\bS^{2n-1}_{\theta}$. It will turn out that said finite generation (mostly) fails, but its weaker topological analogue (requiring that $C\left(\bS^{2n-1}_{\theta}\right)$ contain a dense finitely-generated module over its center) always holds; per \Cref{th:ct.tfg.not.fg}:

\begin{theoremN}\label{thn:cent.fin}
  For $n\in \bZ_{\ge 2}$ and rational skew-symmetric $\theta\in M_n(\bQ)$ the algebra $C\left(\bS_{\theta}^{2n-1}\right)$ is
  \begin{itemize}[wide]
  \item always a topologically finitely-generated module over its center $Z_{\theta}$;

  \item but \emph{algebraically} finitely-generated as such precisely when $\theta$ is integral.  \qedhere
  \end{itemize}
\end{theoremN}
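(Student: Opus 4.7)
The plan is to prove the two bullets by rather different techniques.

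\textbf{Topological finite generation.} Fix $N\in\bZ_{>0}$ with $N\theta_{ij}\in \bZ$ for all $i,j$, so that each $u_i^N$ is central in $A^n_\theta$ and hence $t_i^N\in Z_\theta$ via \eqref{eq:sph2tor}. The skew-symmetry of $\theta$ (yielding $\theta_{ij}+\theta_{ji}=0$) together with the normality of the $t_i$ forces each $|t_i|^2 := t_it_i^*$ to commute with every $t_j$, and hence to lie in $Z_\theta$. Consider the finite candidate set
\begin{equation*}
  S := \left\{\,t^\alpha (t^*)^\beta \;:\; 0\le \alpha_i,\beta_i<N,\ \alpha_i\beta_i=0\text{ for all }i\,\right\}\subset C_\theta.
\end{equation*}
Any $*$-monomial in the $t_i,t_i^*$ reduces, by extracting central factors $|t_i|^2,t_i^N,t_i^{*N}$ and the scalar commutation coefficients $e(\theta_{jk})$, to a $Z_\theta$-multiple of an element of $S$. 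Since the $*$-polynomial subalgebra is norm-dense in the universal $C^*$-algebra $C_\theta$, the submodule $Z_\theta\cdot S$ is dense.

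\textbf{The algebraic dichotomy.} When $\theta$ is integral every $e(\theta_{ij})=1$, so $C_\theta=C(\bS^{2n-1})=Z_\theta$ is cyclic. For $\theta$ non-integral rational, suppose for contradiction $C_\theta=\sum_{i=1}^k Z_\theta c_i$. The induced surjection $Z_\theta^k \twoheadrightarrow C_\theta$ makes the fiber dimension
\begin{equation*}
  d(q) := \dim_\bC\bigl(C_\theta/\fm_q C_\theta\bigr),\qquad q\in\Max Z_\theta,
\end{equation*}
upper semi-continuous (the image of the kernel submodule in $\bC^k$ has lower semi-continuous rank, being a family of linear subspaces varying continuously in $q$; hence $\{d\ge h_\theta\}$ is closed). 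Since $\fm_q C_\theta$ is then of finite codimension in $C_\theta$, it is closed, so the algebraic and $C^*$-quotients coincide. At any Azumaya point $q$ the $C^*$-fiber is a matrix algebra of $\bC$-dimension $h_\theta$ by \Cref{thn:pi}. At a $q$ lying over a vertex $v_j\in \bS^{n-1}_+$ via \eqref{eq:sph2tor}, the boundary conditions \eqref{eq:bdry.cond} restrict $C_\theta$ to $C^*(u_j)\cong C(\bS^1)$, while a monomial analysis (only those central monomials $t_j^a(t_j^*)^b$ with $\alpha_i=\beta_i=0$ for $i\ne j$ survive the restriction to $v_j$, and their centrality forces $N_j\mid a-b$) shows that $Z_\theta$ restricts to $C^*(u_j^{N_j})$, where $N_j$ is the least common multiple of the denominators of the entries $\theta_{kj}$ for $k\in[n]$. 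Consequently $C_\theta/\fm_q C_\theta\cong C(\bS^1)/(u_j^{N_j}-\zeta)\cong\bC^{N_j}$, of dimension $N_j$. A short skew-symmetry check gives $h_\theta>N_j$ for any $j$ admitting some $\theta_{ij}\notin \bZ$: $\theta e_i$ then has non-zero $j$-th coordinate $-\theta_{ij}$, while every multiple of $\theta e_j$ has zero $j$-th coordinate, forcing $\langle\theta e_i,\theta e_j\rangle\supsetneq\langle\theta e_j\rangle$ inside $(\bQ/\bZ)^n$ and hence $h_\theta=|\langle\theta e_1,\ldots,\theta e_n\rangle|>N_j$.

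\textbf{Closing the contradiction.} Non-integrality of $\theta$ guarantees such a $j$. A compactness argument — pick Azumaya $q_n\in\Max Z_\theta$ over interior points $y_n\to v_j$ of $\bS^{n-1}_+$ (interior points are Azumaya by \Cref{thn:pi}), then extract a subsequential limit in the compact Hausdorff space $\Max Z_\theta$ — produces $q_\infty\in\pi^{-1}(v_j)$ lying in the closure of the Azumaya locus, where $\pi\colon\Max Z_\theta\to \bS^{n-1}_+$ is the natural projection. Upper semi-continuity now forces $d(q_\infty)\ge h_\theta$, contradicting $d(q_\infty)=N_j<h_\theta$. The main technical hurdle I anticipate is the vertex-fiber identification — confirming that the image of $Z_\theta$ in $C_\theta|_{v_j}$ is precisely $C^*(u_j^{N_j})$, neither larger nor smaller — for which I would use the description of $\Max Z_\theta$ as an explicit branched cover supplied by \Cref{thn:pi} together with the above monomial analysis.
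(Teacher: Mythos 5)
Your proposal is correct in both directions but follows a genuinely different route from the paper on each bullet, and in each case the route is more concrete.

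For topological finite generation the paper's argument is abstract: it views $C_\theta$ as the section space of a subhomogeneous (F)~Banach bundle over $\Max(Z_\theta)$, stratifies the base into the fiber-dimension level sets $X_d$, shows each stratum is a finite union of finite-dimensional paracompact pieces admitting a finite contractible-in-ambient cover, and then invokes Gogi\'c's criterion \cite[Theorem 1.1]{gog_top-fg} for topological finite generation of Hilbert modules. You instead exhibit an explicit finite generating set $S=\{t^\alpha(t^*)^\beta : 0\le\alpha_i,\beta_i<N,\ \alpha_i\beta_i=0\}$ and verify directly (using the centrality of $|t_i|^2$ and $t_i^N$, and the Fuglede--Putnam-forced scalar commutation relations $t_i^*t_j = e(\theta_{ij})t_jt_i^*$) that $Z_\theta\cdot S$ contains the dense $*$-polynomial subalgebra. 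This is both shorter and more informative (it produces generators), though less portable than the paper's bundle-theoretic argument.

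For the algebraic dichotomy, the paper regards a finitely-generated $C_\theta$ as a finitely-generated Hilbert $Z_\theta$-module, deduces projectivity, and then applies the Hofmann--Keimel correspondence plus Swan's theorem to conclude the fiber bundle is locally trivial and hence of constant rank over the connected base, contradicting \Cref{cor:azumaya.rare}. Your argument replaces the Hilbert-module/Swan machinery with elementary upper semi-continuity of $q\mapsto\dim_\bC(C_\theta/\fm_qC_\theta)$ for a finitely-generated module, together with the explicit vertex-fiber computation $d(q_\infty)=N_j<h_\theta$ and the fact that interior points are Azumaya (\Cref{th:ctheta.azumaya.loc}). The one step you should not gloss over is the assertion that $\fm_qC_\theta$, being of finite codimension in the $C^*$-algebra $C_\theta$, is automatically closed (so that the algebraic and $C^*$ fibers agree). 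This is true --- a finite-codimensional two-sided ideal $I\trianglelefteq A$ in a $C^*$-algebra is closed, essentially because $J:=\bar I$ satisfies $J=J^2$ (polar-decomposition factorization inside the hereditary subalgebra $J$), whence $J/I$ is a finite-dimensional algebra equal to its own square, forcing $J/I=0$ --- but it is not completely routine, is the load-bearing point in your semi-continuity chain, and deserves a proof or citation. With that lemma supplied, your contradiction closes cleanly and you avoid invoking \cite[Theorem A]{2409.03531v1} and Swan's theorem entirely.
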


\Cref{thn:pi,thn:cent.fin} both link naturally to the theory of \emph{Banach, Hilbert} and \emph{$C^*$ bundles} (\cite[\S II.13]{fd_bdl-1}, \cite[pp.7-9]{dg_ban-bdl}, \cite[\S 1]{dupr_clsf-cast-bdl}, etc.) over compact Hausdorff spaces and satellite topics: the theory of \emph{non-commutative branched covers} initiated in \cite{pt_brnch} and phrased in the language of \emph{finite-index conditional expectations} \cite[Definition 2]{fk_fin-ind} is germane to the discussion below, which relies directly or indirectly on material from \cite{bg_cx-exp,2409.03531v1,pt_brnch}.

\subsection*{Acknowledgments}

The paper has benefited from fruitful exchanges with B. Badzioch, M. Khalkhali, B. Passer and I. Thompson.


\section{Isomorphisms of rationally-deformed quantum 3-spheres}\label{se:iso.pb}

Throughout, unqualified (typically vector or algebra) \emph{bundles} are assumed \emph{locally trivial} \cite[Definition 1.1.8]{hjjm_bdle}; they are to be distinguished from more general constructs termed \emph{(F) Banach bundles} on \cite[pp.7-8]{dg_ban-bdl}, which will also make an appearance in \Cref{se:cent.fin}.

The isomorphism problem for non-commutative 3-spheres is not difficult to resolve, given its 2-torus analogue. The following is very much in the spirit of \cite[Theorem 3]{rief_irrat} for {\it irrational} tori. The {\it rational} torus version \cite[Theorem 1.1]{hks_erg} (recovered also as \cite[Theorem 3.12]{rief_canc}) typically does not involve matrix tensorands.

\begin{theorem}\label{th:theta3sphiso}
  Consider $n,n'\in \bZ_{>0}$,  $\theta,\theta'\in \bR$, and set
  \begin{equation*}
    C_{\theta,n}:=C(\bS^3_{\theta})\otimes M_n
    ,\quad
    A_{\theta,n}:=C(\bT^2_{\theta})\otimes M_n,
  \end{equation*}
  and similarly for the primed parameters. The following conditions are equivalent.
  \begin{enumerate}[(a),wide]
  \item\label{item:3sphiso} We have an isomorphism
    \begin{equation}\label{eq:ctnctn}
      C_{\theta,n}
      \cong
      C_{\theta',n'}.
    \end{equation}

  \item\label{item:2toriso} We have an isomorphism
    \begin{equation}\label{eq:atnatn}
      A_{\theta,n}
      \cong
      A_{\theta',n'}.
    \end{equation}

  \item\label{item:samepara} The parameters coincide save for trivial modifications, in the sense that
    \begin{equation*}
      n=n'
      \quad\text{and}\quad
      \theta\in \pm\theta'+\bZ.
    \end{equation*}
  \end{enumerate}
\end{theorem}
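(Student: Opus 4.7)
The implications $(c)\Rightarrow(a),(b)$ are routine: integer shifts of $\theta$ preserve the defining scalars $e(\theta_{jk})$, and $\theta\mapsto-\theta$ is realized by swapping the two generators in either presentation, both moves compatible with tensoring by the common $M_n$. What remains to show is $(a)\Rightarrow(b)$ and $(b)\Rightarrow(c)$.

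For $(a)\Rightarrow(b)$ I would use the Heegaard identification \eqref{eq:sph2tor}: evaluation at any interior point of the arc $\bS^1_+$ gives a surjection $C(\bS^3_{\theta})\twoheadrightarrow C(\bT^2_{\theta})$, hence $C_{\theta,n}\twoheadrightarrow A_{\theta,n}$. To extract this intrinsically I would stratify the primitive ideal space $\mathrm{Prim}(C_{\theta,n})=\mathrm{Prim}(C(\bS^3_{\theta}))$ as an arc whose open interior is fibered by $\mathrm{Prim}(A_{\theta,n})$ and whose two endpoints support circles $\mathrm{Prim}(C(\bS^1)\otimes M_n)$, with the two strata distinguished by topological features (dimension and local fiber structure); any isomorphism $C_{\theta,n}\cong C_{\theta',n'}$ would then induce a stratum-preserving homeomorphism of primitive ideal spaces and a matching of interior fibers, giving $A_{\theta,n}\cong A_{\theta',n'}$. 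This step will be the main obstacle, requiring care to verify that the Heegaard stratification is a $C^*$-algebra isomorphism invariant.

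For $(b)\Rightarrow(c)$ I would split on the rationality of $\theta$. When $\theta$ is irrational, $A_{\theta,n}$ is simple with unique normalized trace $\tau$, whose image on $K_0$ is $G_{\theta,n}:=\tfrac{1}{n}(\bZ+\theta\bZ)\subset\bR$ with $[1]\mapsto 1$; an isomorphism forces $G_{\theta,n}=G_{\theta',n'}$, and intersection with $\bQ$ selects the lattice $\tfrac{1}{n}\bZ$ by irrationality, so $n=n'$, after which $\bZ+\theta\bZ=\bZ+\theta'\bZ$ forces $\theta'\in\pm\theta+\bZ$ via a direct substitution. When $\theta=p/q$ is in lowest terms, $A_{\theta,n}$ is an Azumaya algebra of PI-degree $qn$ over its center $C(\bT^2)$; since the Dixmier--Douady obstruction in $H^3(\bT^2,\bZ)=0$ vanishes, it presents as $\Gamma(\mathrm{End}(E))$ for a rank-$qn$ complex vector bundle $E$ on $\bT^2$ with $c_1(E)\equiv np\pmod{qn}$ (the factor $n$ coming from the trivial $\bC^n$ tensorand). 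Isomorphism matches bundles up to line bundle twist, yielding $qn=q'n'$ and $np\equiv\pm n'p'\pmod{qn}$; taking $\gcd$ with $qn$ and using $\gcd(p,q)=1$ gives $n=\gcd(np,qn)=\gcd(n'p',q'n')=n'$, whence $q=q'$ and $p\equiv\pm p'\pmod q$.
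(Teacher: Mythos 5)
Your overall structure mirrors the paper's: you handle $(c)\Rightarrow(a),(b)$ the same way, and your $(b)\Rightarrow(c)$ argument --- trace range on $K_0$ for irrational $\theta$, characteristic classes of rank-$qn$ matrix bundles over $\bT^2$ for rational $\theta$ --- is essentially the paper's argument (the paper cites Rieffel directly in the irrational case where you re-derive the trace computation, and in the rational case uses $n\widetilde{p}_q$ rather than your $np$ for the Chern class, but since $\widetilde{p}_q\equiv\pm\widetilde{p'}_{q}\pmod q$ iff $p\equiv\pm p'\pmod q$ this makes no difference to the conclusion). The $\gcd$ manipulation recovering $n$ is harmless but unnecessary if you first invoke \Cref{th:recovermatsize} to get $n=n'$.

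The genuine gap is in $(a)\Rightarrow(b)$, and you have slightly misdiagnosed where the difficulty lies. Recognizing the stratification of $\mathrm{Prim}$ as an invariant is not the hard part: the set of points supporting maximal-dimensional irreducible representations (i.e.\ the Azumaya locus $\bT^2\times(0,1)\subset\bS^3$) is manifestly an isomorphism invariant, and the paper simply cites \cite[Proposition 4.5]{cp_cont-equiv_xv3} for the invariance of the center and of the restriction to the Azumaya locus. The real issue is your final step ``matching of interior fibers, giving $A_{\theta,n}\cong A_{\theta',n'}$.'' In the rational case the fiber of $\mathrm{Prim}$ at a single interior point is just $M_{qn}$, which recovers nothing; and while evaluation along a slice $\{s\}\times\bT^2$ does yield the algebra $A_{\theta,n}$ as a quotient, that slice (and hence that ideal) is \emph{not} canonical --- an abstract isomorphism $C_{\theta,n}\cong C_{\theta',n'}$ need not carry slice ideals to slice ideals. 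What rescues the argument, and what your plan omits, is the homotopy step the paper uses: the isomorphism does give isomorphic $qn\times qn$ matrix-algebra bundles over the common Azumaya locus $\bT^2\times(0,1)$; since $\bT^2\times(0,1)$ deformation-retracts onto $\bT^2\times\{1/2\}$ and such bundles are classified homotopically by $[-,BPU(qn)]$, the restrictions to the slice are isomorphic bundles, and \emph{those} restrictions are $A_{\theta,n}$ and $A_{\theta',n'}$. Without this retract-plus-classification step, ``matching interior fibers'' does not produce the asserted isomorphism of quantum torus algebras.
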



It might be worthwhile to first observe separately that the size $n$ of the tensorand $M_n$ can be recovered from the isomorphism class of either $C_{\theta,n}$ or $A_{\theta,n}$ and in fact, more generally, for non-commutative spheres/tori of arbitrary dimension.

\begin{theorem}\label{th:recovermatsize}
  Let $\theta\in M_m(\bR)$ be a skew-symmetric matrix, $m\ge 2$ and $n\in \bZ_{>0}$.

  \begin{enumerate}[(1),wide]

  \item\label{item:torirecn} The isomorphism class of the $C^*$-algebra $C(\bT^{m}_{\theta})\otimes M_n$ determines $m$ and $n$.

  \item\label{item:sphrecn} The isomorphism class of the $C^*$-algebra $C(\bS^{2m-1}_{\theta})\otimes M_n$ determines $m$ and $n$. 

    
  \end{enumerate}
\end{theorem}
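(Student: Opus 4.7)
\emph{Plan.} Both parts would be handled via elementary $K_0$-theoretic invariants, with the sphere case reduced to the torus case through a canonical quotient construction. The core technical input is that the unit class $[1]$ is primitive (generates a rank-one direct summand) in $K_0$ of both $C(\bT^m_\theta)$ and $C(\bS^{2m-1}_\theta)$, for every skew-symmetric $\theta$; combined with the Morita-invariance identification $K_0(A\otimes M_n)\cong K_0(A)$ and the formula $[1_{A\otimes M_n}] = n\cdot[1_A]$, this lets one recover $n$ as the divisibility of the unit in $K_0$.

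For part~(1), I would first establish that $K_0(C(\bT^m_\theta))\cong \bZ^{2^{m-1}}$ for every $\theta$ by iterating the Pimsner--Voiculescu six-term sequence on the presentation of $C(\bT^m_\theta)$ as an $m$-fold iterated $\bZ$-crossed product of $\bC$, and verify primitivity of $[1]$ case-by-case: in the irrational regime via the unique trace (whose image contains $1$ as a primitive element), and in the rational regime by realising $C(\bT^m_\theta)$ as a section algebra of an Azumaya bundle over its centre, where $[1]$ corresponds to a vector-bundle class with coprime rank and Chern data. The rank $2^{m-1}$ then recovers $m$ and the divisibility of $[1_B]$ recovers $n$. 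For part~(2), the same type of calculation -- using the six-term sequence for~\Cref{eq:sph2tor} or continuous deformation to $\theta=0$ -- gives $K_0(C(\bS^{2m-1}_\theta))\cong \bZ$ with $[1]$ primitive, which yields $n=|[1_B]|$. To extract $m$ I would exploit the fact that~\Cref{eq:sph2tor} furnishes $C(\bT^m_\theta)$ as a canonical quotient of $C(\bS^{2m-1}_\theta)$ by evaluation at any interior point of $\bS^{m-1}_+$, so that $C(\bT^m_\theta)\otimes M_n$ appears as a quotient of $B$; applying part~(1) to it recovers $m$.

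The main obstacle is ensuring the sphere-to-torus quotient is canonical, i.e.\ that its isomorphism class depends only on $B$ and not on the chosen interior evaluation point. Independence of the choice is automatic since all interior fibers of the structure map in~\Cref{eq:sph2tor} are isomorphic to $C(\bT^m_\theta)$, but for an intrinsic characterisation I would single out $C(\bT^m_\theta)\otimes M_n$ as the unique quotient of $B$ attaining the maximum $K_0$-rank $2^{m-1}$ among the finite family of ``face'' quotients $C(\bT^F_{\theta|_F})\otimes M_n$ produced by the boundary conditions in~\Cref{eq:sph2tor}; since any such face with $|F|<m$ has strictly smaller $K_0$-rank $2^{|F|-1}$, the maximum is uniquely realised by the full-dimensional interior quotient.
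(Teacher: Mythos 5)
Your part~(1) is essentially the paper's argument (both rest on $K_0(C(\bT^m_\theta))\cong\bZ^{2^{m-1}}$ with $[1]$ a generator, cited there from Elliott), and your route to $n$ in part~(2) via primitivity of $[1]$ in $K_0(C(\bS^{2m-1}_\theta))\cong\bZ$ is a legitimate alternative to the paper's observation that $n$ is the minimal dimension of an irreducible representation (though you would need to actually supply the $K_0$ computation for the sphere algebra, which you only sketch).

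The genuine gap is in recovering $m$. You acknowledge that you must single out $C(\bT^m_\theta)\otimes M_n$ \emph{intrinsically} among the quotients of $B:=C(\bS^{2m-1}_\theta)\otimes M_n$, and you propose to do so by maximizing $K_0$-rank ``among the finite family of face quotients produced by the boundary conditions in \Cref{eq:sph2tor}.'' But that family is defined via the structure map $C(\bS^{m-1}_+)\to B$, which is extra data not carried by the abstract $C^*$-algebra $B$; for rational $\theta$ this map is a strict inclusion into the center, so you cannot recover $\bS^{m-1}_+$ simply as $\Max Z(B)$. If instead you drop the restriction and maximize $K_0$-rank over \emph{all} quotients, the supremum is unbounded: already $C(Y)\otimes M_n$ for arbitrary closed $Y\subseteq\bS^{2m-1}$ is a quotient when $\theta=0$, and $K^0(Y)$ can have arbitrarily large (even infinite) rank. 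So the characterization does not pin down the torus quotient.

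The paper sidesteps this entirely. It never attempts to locate the torus quotient inside $B$; instead it recovers $m$ by combining two intrinsic numbers. First, the covering dimension of $X:=\Max Z(B)$ equals $m-1+\operatorname{rank}\ker\theta$. Second, the fibers of $B$ over points of $X$ are twisted group $C^*$-algebras $C^*(\bZ^m/\Gamma_F,\sigma)\otimes M_n$, whose $K_0$-ranks determine $\operatorname{rank}(\bZ^m/\Gamma_F)$ via an iterated Pimsner--Voiculescu computation; the minimum of these over $X$ is $m-\operatorname{rank}\ker\theta$. Their sum is $2m-1$, which yields $m$. This works uniformly in $\theta$ and uses only data visible from the abstract algebra and its center, which is exactly what your argument is missing.
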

\begin{proof}
  We structure the argument conforming to the statement.

  \begin{enumerate}[label={},wide=0pt]

  \item \textbf{ \Cref{item:torirecn}} This is a simple matter of unwinding some of the well-known K-theoretic results on non-commutative tori.

    According to \cite[Theorem 2.2]{el_kproj} $K_0(C(\bT^m_{\theta}))$ is free abelian of rank $2^{m-1}$ with the class of the identity as a generator. The usual isomorphism
    \begin{equation*}
      K_0(A)\cong K_0(A\otimes M_n)
    \end{equation*}
    induced by the upper-corner embedding \cite[Lemma 6.2.10]{wo} then makes it clear that in
    \begin{equation*}
      K_0\left(C(\bT^{m}_{\theta})\otimes M_n\right)\cong \bZ^{2^{m-1}}
    \end{equation*}
    the class of the identity is divisible precisely by $n$, hence the conclusion.
    
  \item \textbf{ \Cref{item:sphrecn}: recovering $n$.} \Cref{eq:sph2tor} realizes $C(\bS_{\theta}^{2m-1})$ as a \emph{$C(X)$-algebra} (i.e. \cite[Definition 1.5]{zbMATH04056334} a $C^*$-algebra receiving a central morphism from $C(X)$) for $X=\bS_+^{n-1}$. The \emph{fibers}
    \begin{equation*}
      C(\bS_{\theta}^{2m-1})|_p
      :=
      C(\bS_{\theta}^{2m-1})
      /
      \left(C(\bS_{\theta}^{2m-1})\cdot \left\{f\in C(X)\ :\ f(p)=0\right\}\right)
      ,\quad
      p\in \bS_+^{n-1}
    \end{equation*}
    are non-commutative-torus algebras $A$ in general, and specifically to $C(\bS^1)$ at the $n$ vertices of the spherical simplex $X$. It follows that $n$ can be recovered from the abstract $C^*$-algebra $C:=C(\bS^{2m-1}_{\theta})\otimes M_n$ as the smallest dimension of an irreducible representation.

  \item \textbf{ \Cref{item:sphrecn}: recovering $m$.} A small detour will first discern what can be recovered from the center 
    \begin{equation}\label{eq:zalone}
      Z:=Z\left(C(\bS^{2m-1}_{\theta})\otimes M_n\right)
      =
      Z\left(C(\bS^{2m-1}_{\theta})\right)
      =
      Z(C)
    \end{equation}
    alone. The isomorphism \Cref{eq:sph2tor} specializes to $Z \cong \mathrm{Cont}_{\partial}\left(\bS_+^{m-1}\xrightarrow{\quad}Z\left(C(\bT^n_{\theta})\right)\right)$, with `$\partial$' indicating the central analogue
    \begin{equation}\label{eq:centbdr}
      f\left(\bS^{m-1}_{+,F}\right)
      \subseteq
      Z\left(C(\bT^n_{\theta})\right)\cap C^*(\{u_j\ |\ j \in F\}).
    \end{equation}
    of \Cref{eq:bdry.cond}. 
    
    Recall \cite[\S 2.2]{el_morita-def} that $C(\bT^n_{\theta})$ is a cocycle twist $C^*(\bZ^n,\sigma)$ of the group algebra $C^*(\bZ^n)$ (with the generators of $\bZ^n$ mapping to the $u_i$). The proof of \cite[Lemma 2.3]{el_kproj} then describes the center $Z\left(C(\bT^n_{\theta})\right)$ as 
    \begin{equation}\label{eq:gammaker}
      C^*(\Gamma)\subset C^*(\bZ^n)
      ,\quad
      \Gamma:=\text{kernel of the bicharacter }
      \bZ^n\wedge \bZ^n\xrightarrow{e(\theta) = \exp(2\pi i \theta)} \bS^1
    \end{equation}
    with the matrix $\theta$ regarded, as usual, as a bilinear form. The boundary condition \Cref{eq:centbdr} then reads
    \begin{equation*}
      f\left(\bS^{m-1}_{+,F}\right)
      \subseteq
      C^*(\Gamma_F)\cong C(\bT_F),\ \bT_F:=\widehat{\Gamma_F},
    \end{equation*}
    where
    \begin{equation}\label{eq:zf}
      \Gamma_F:=\Gamma\cap \bZ^F,\quad \bZ^F:=\text{sum of the $F$-indexed summands in }\bZ^n.
    \end{equation}
    We write $\Max(A)$ for the \emph{maximal spectrum} \cite[Exercise 1.26]{am_comm} of a commutative ring, typically applying the notion to commutative $C^*$-algebras (and occasionally omitting the qualifier ``maximal''). $X:=\Max(Z)$ can thus be described as follows:
    \begin{itemize}
    \item To each finite-set inclusion
      \begin{equation*}
        F\subseteq F'\subseteq [m]
      \end{equation*}
      associate
      \begin{equation*}
        \begin{aligned}
          \text{an inclusion}\quad \Delta_F&\lhook\joinrel\xrightarrow{\quad} \Delta_{F'},\quad \Delta_F:=\bS^{m-1}_{+,F}\\
          \text{and a quotient}\quad \bT_F&\xrightarrowdbl{\quad}\bT_{F'}\quad \text{dual to}\quad\Gamma_F\le \Gamma_{F'}.
        \end{aligned}
      \end{equation*}

    \item And then recover $X$ as the {\it coend} \cite[\S IX.6]{mcl_work}
      \begin{equation}\label{eq:xiscoend}
        X=\Max\left(Z\left(C\left(\bS_{\theta}^{2m-1}\right)\right)\right)\cong \int^F \Delta_F\times \bT_F.
      \end{equation}
    \end{itemize}
    This is a particular instance of the {\it geometric realization} \cite[Definition 3.8.1]{riehl_catht} of a {\it simplicial object} \cite[\S VII.5]{mcl_work}: more precisely, $F\xmapsto{} \bT_F$ is a simplicial topological space (i.e. a simplicial object in the category of topological spaces), and \Cref{eq:xiscoend} is its simplicial realization.

    The space \Cref{eq:xiscoend} does have {\it dimension}
    \begin{equation}\label{eq:dimx}
      \begin{aligned}
        \dim X &= m-1+\dim \Max(Z(C(\bT^m_{\theta})))\\
               &= m-1+\mathrm{rank}\ker\theta,
      \end{aligned}      
    \end{equation}
    where
    \begin{itemize}
    \item `dimension' is used in any of three senses: {\it small inductive} \cite[Definition 1.1.1]{engel_dim} or {\it large inductive} \cite[Definition 1.6.1]{engel_dim} or {\it covering} \cite[Definition 1.6.7]{engel_dim} dimension, all coincident \cite[Theorem 1.7.7]{engel_dim} for separable metric spaces;

    \item and the {\it rank} of an abelian group is \cite[Definition A1.59]{hm4}
      \begin{equation*}
        \mathrm{rank}~\Gamma:=\dim_{\bQ}~ \bQ\otimes_{\bZ}\Gamma.
      \end{equation*}
    \end{itemize}
    By contrast to $m$ (\Cref{ex:centernotdetm}), this latter number \Cref{eq:dimx} {\it is}, then, recoverable from the isomorphism class of $Z=Z(C)$ alone (so $C$ itself is not needed for this).

    The fibers of $C$ at the various points of $X$ are of the form $C^*(\bZ^m/\Gamma_F,\ \sigma)\otimes M_n$ for subsets $F\subseteq [m]$ with the twist induced by the original cocycle $\sigma$ on $\bZ^n/\Gamma_F$ (possible, given that $\Gamma=\Gamma_{[m]}$ is by definition the kernel of $\theta$).

    The abelian groups $\bZ^m/\Gamma_F$ have ranks
    \begin{equation}\label{eq:quotranks}
      m-\mathrm{rank}~\Gamma_F
      \ge
      m-\mathrm{rank}~\Gamma_{[m]}
      =
      m-\mathrm{rank}\ker\theta,
    \end{equation}
    with equality achieved. Since the sum between \Cref{eq:dimx} and this minimum is $2m-1$, it will be enough to observe that the ranks on the left-hand side of \Cref{eq:quotranks} can be recovered from the corresponding fibers $C^*(\bZ^m/\Gamma_F,\ \sigma)\otimes M_n$.

    To see this,
    \begin{itemize}
    \item write
      \begin{equation*}
        \bZ^m/\Gamma_F\cong \left(\text{finite abelian group }H\right)\times \bZ^r,\quad r:={\mathrm{rank}~\bZ^m/\Gamma_F}
      \end{equation*}
      \cite[p.VII.19, Theorem 2]{bourb_alg-04-07};

    \item so that $C^*(\bZ^m/\Gamma_F,\ \sigma)$ can be expressed as an iterated, $r$-fold crossed product
      \begin{equation*}
        C^*(H)\rtimes \bZ\rtimes\cdots\rtimes \bZ
      \end{equation*}
      as in \cite[Proposition 12.8]{gvf_ncg};

    \item whence
      \begin{equation*}
        K_0(C^*(\bZ^m/\Gamma_F,\ \sigma)\otimes M_n)
        \cong
        K_0(C^*(\bZ^m/\Gamma_F,\ \sigma))
        \cong \bZ^{2^{r-1}}
      \end{equation*}
      as for non-commutative tori, via the {\it Pimsner-Voiculescu sequence} \cite[Theorem 10.2.1]{blk}. 
    \end{itemize}
  \end{enumerate}
  This completes the proof. 
\end{proof}

It is perhaps worth noting at this point that the isomorphism class of the center \Cref{eq:zalone} alone does {\it not}, in general, determine $m$.

\begin{example}\label{ex:centernotdetm}
  We exhibit non-commutative-sphere algebras $C(\bS_{\theta}^{2m-1})$ with isomorphic centers and distinct $m$ (so also, by necessity, distinct $\theta$):
  \begin{enumerate}[(I),wide=0pt]
  \item Take first $m=2$ and $\theta=0$, so that
    \begin{equation*}
      C(\bS_{\theta}^{2m-1})\cong C(\bS^3)
    \end{equation*}
    and the spectrum of the center is the 3-sphere.

  \item On the other hand, take $m=3$ and some $\theta$ for which the kernel $\Gamma$ of \Cref{eq:gammaker} is of rank 1 (i.e. isomorphic to $\bZ$) and not contained in any of the subgroups
    \begin{equation*}
      \bZ^F,\quad F\subset [3]\text{ properly}
    \end{equation*}
    of \Cref{eq:zf}. The coend \Cref{eq:xiscoend} is then
    \begin{equation*}
      \bS^2_{+}\times \bS^1/\left((x,t)=(x,t'),\ \forall x\in\partial S^2_{+},\ t\in \bS^1\right)
    \end{equation*}
    with `$\partial$' denoting the boundary. Alternatively, in words: consider the product $\bD^2\times \bS^1$ (where $\bD^2$ is the closed 2-disk) and identify all copies
    \begin{equation*}
      \bS^1\times \{t\}\subset \bS^1\times \bS^1\subset \bD^2\times \bS^1
    \end{equation*}
    to each other in the obvious fashion. It is a simple exercise to see that this space is again (homeomorphic to) the 3-sphere.   \end{enumerate}
\end{example}


Also in reference to \Cref{th:recovermatsize}, note that an isomorphism
\begin{equation}\label{eq:ana'n}
  A\otimes M_n\cong A'\otimes M_n
\end{equation}
of unital $C^*$-algebras does not, in general, imply that $A$ and $A'$ themselves are isomorphic:

\begin{example}\label{ex:notcancel}
  Recall \cite[\S 2]{mcc_kext} the unital $C^*$-algebras $A=U^{nc}_{m,n}$ generated by the $m n$ elements of a unitary (possibly non-square) $m\times n$ matrix. As explained in loc.cit., we may as well assume that $m\le n$.
  
  This is (by definition) the universal unital $C^*$-algebra $A$ whose Hilbert modules $A^m$ and $A^n$ are isomorphic, so their endomorphism algebras (as Hilbert modules) will of course also be isomorphic:
  \begin{equation*}
    A\otimes M_m\cong M_m(A)\cong M_n(A)\cong A\otimes M_n.
  \end{equation*}
  Fix, now, any $m\ge 2$, and consider the case $A=U^{nc}_{m,m^2}$. We then have
  \begin{equation*}
    A\otimes M_m\cong A\otimes M_{m^2}\cong \left(A\otimes M_m\right)\otimes M_m,
  \end{equation*}
  i.e. \Cref{eq:ana'n} with $A'=A\otimes M_m$ and $n=m$. An isomorphism
  \begin{equation}\label{eq:aam}
    A\cong A'\cong A\otimes M_m
  \end{equation}
  does not exist though: according to \cite[discussion immediately preceding \S 6]{ag_sepgrph}
  \begin{equation*}
    K_0(A)\cong \bZ/(n-m)=\bZ/(m^2-m)
    \text{ generated by the class of }1\in A,
  \end{equation*}
  so that class cannot be a multiple of $m$ (as it would be, given an isomorphism \Cref{eq:aam}).
\end{example}

The ensuing discussion freely references \emph{Azumaya algebras} (in their various incarnations: \cite[Definition 5.1.4]{agpr_pi}, \cite[\S 13.7.6]{mr}, \cite[\S III.1.3]{bg_algqg}, etc.) and ancillary notions, including the following. 

\begin{itemize}[wide]
\item \emph{polynomial identities} (or \emph{PIs} for short) \cite[Definition 2.2.1]{agpr_pi} for (always unital) rings $A$, i.e. non-commutative polynomials vanishing identities when substituting arbitrary elements of $A$ for the variables;

\item \emph{polynomial identity (PI) algebras} \cite[Definition 2.2.41]{agpr_pi}, i.e. those algebras $A$ over commutative rings (usually, here, fields) $\Bbbk$ satisfying at least one non-zero PI \emph{stable} \cite[\S 2.2.3]{agpr_pi} in the sense that all $A\otimes_{\Bbbk}\Bbbk'$ for commutative-ring extensions $\Bbbk\subseteq \Bbbk'$;

\item An algebra $A$ over a commutative ring $R$ is \emph{Azumaya} (\emph{of rank $n^2$}) as such \cite[Definition 5.1.4]{agpr_pi} if for some faithfully flat extension $R\to R'$ we have
  \begin{equation*}
    A\otimes_R R'\cong M_n(R').
  \end{equation*}
  
\item Equivalently \cite[Theorem 10.3.2]{agpr_pi}, $A$ is rank-$n^2$ Azumaya (unqualified, i.e. over its center) precisely when it satisfies the polynomial identities of $n\times n$ matrices but has no quotient satisfying the identities of $(n-1)\times (n-1)$ matrices.
\end{itemize}
We refer the reader to the sources just cited, with more specific citations accompanying the text where appropriate.

\begin{remark}\label{re:az.scl.ext}
  We will repeatedly (and henceforth tacitly) take it for granted that being Azumaya over a commutative ring is stable under scalar extension: if $A$ is Azumaya over $R$ then $A_S:=A\otimes_RS$ is Azumaya over $S$ for any commutative-ring morphism $R\to S$ \cite[Proposition 5.4.28]{agpr_pi}. 
\end{remark}

\pf{th:theta3sphiso}
\begin{th:theta3sphiso}
  That \Cref{item:samepara} implies the other conditions is clear enough: for tori transitioning between $\pm\theta$ is effected by interchanging the two unitary generators, and the realization \Cref{eq:sph2tor} settles the matter for spheres. We thus focus on the converse implications, seeking to deduce \Cref{item:samepara} from either of the other conditions.
  
  \begin{enumerate}[(I),wide]
  \item \textbf{ Rational vs. irrational parameters.} Note first that given an isomorphism \Cref{eq:ctnctn}, the deformation parameters $\theta$ and $\theta'$ are simultaneously (ir)rational.

    Indeed, if one of them ($\theta$, say) is irrational, then $C_{\theta,n}$ has infinite-dimensional {\it simple} quotients isomorphic to $A_{\theta,n}$ (simple, being a minimal tensor product of simple $C^*$-algebras: \cite[Corollary 12.12]{gvf_ncg} and \cite[Corollary IV.4.21]{tak1}). Furthermore, \Cref{eq:sph2tor} makes it clear that {\it all} infinite-dimensional simple quotients are of this form.
    
    On the other hand, if $\theta'\in \bQ$ then the simple quotients of $C_{\theta',n'}$ are plainly finite-dimensional, again by \Cref{eq:sph2tor}.

    The same argument also (more easily) handles \Cref{eq:atnatn}.
    
  \item \textbf{ The irrational case.} This is the simpler branch of the argument: we have already observed that if $\theta$ and $\theta'$ are both irrational then \Cref{eq:ctnctn} implies \Cref{eq:atnatn}, $A_{\theta,n}$ and $A_{\theta',n'}$ being the only infinite-dimensional simple quotients of $C_{\theta,n}$ and $C_{\theta',n'}$ respectively. We can then simply appeal to \cite[Theorem 3.12]{rief_canc}.

  \item \textbf{ The rational case.} I.e. we are now assuming that
    \begin{equation*}
      \theta=\frac pq\quad\text{and}\quad \theta'=\frac{p'}{q'}
    \end{equation*}
    are rational (in lowest terms, as depicted).

    We focus mostly on the implication \Cref{item:3sphiso} $\Longrightarrow$ \Cref{item:samepara}, as it will turn out to be more elaborate, indicating along the way where and how the argument changes so as to also deliver \Cref{item:2toriso} $\Longrightarrow$ \Cref{item:samepara}. There is no harm in assuming $n=n'$ throughout, as \Cref{th:recovermatsize} allows. 
    

    An isomorphism \Cref{eq:ctnctn} induces isomorphisms between
    \begin{itemize}[wide]
    \item the centers $C(\bS^3)$ of the two $C^*$-algebras (\cite[Proposition 4.5(1)]{cp_cont-equiv_xv3});

    \item and hence also between the $q\times q$ matrix bundles over the Azumaya loci
      \begin{equation*}
        \bS^3\setminus\left(\sqcup\text{ two circles}\right)
        \cong
        \bT^2\times(0,1)
      \end{equation*}
      of $C_{\theta,n}$ and $C_{\theta',n'}$ (\cite[Proposition 4.5(3)]{cp_cont-equiv_xv3}).
    \end{itemize}
    
    Over those Azumaya loci, $C_{\theta,n}$ and $C_{\theta',n'}$ are isomorphic $qn\times qn$ and $q'n'\times q'n'$ matrix bundles over $X=\bT^2\times (0,1)$. We thus have $qn=q'n'$, and because
    \begin{itemize}
    \item matrix bundles are classifiable homotopically \cite[\S 18.2.4]{hjjm_bdle};
    \item $X$ deformation-retracts onto its slice
      \begin{equation*}
        \bT^2\cong \bT^2\times \left\{\frac 12\right\}\subset X;
      \end{equation*}
    \item and over that slice the two algebras are $A_{\theta,n}$ and $A_{\theta',n'}$ respectively,      
    \end{itemize}
    the implication \Cref{item:2toriso} $\Longrightarrow$ \Cref{item:samepara} reduces to \Cref{item:3sphiso} $\Longrightarrow$ \Cref{item:samepara}, on which we henceforth focus. We have been assuming that $n=n'$, so that $q=q'$ because $qn=q'n'$. The rest is simple conceptually, but requires a bit of unwinding of the attendant bundle-classification theory.    
    
    $qn\times qn$-matrix bundles on $X=\bT^2\times (0,1)$ are classified by homotopy classes of maps $X\to BPU(qn)$ into the {\it classifying space} \cite[\S 7.2, Definition 2.7]{hjjm_bdle} of the projective $qn\times qn$ unitary group (\cite[\S 18.3, Assertion 3.2 and Remark 3.3]{hjjm_bdle}). Writing $[-,-]$ for homotopy classes of maps \cite[\S 6.4, Definition 4.2]{hjjm_bdle}, we have
    \begin{equation*}
      [X,-]\cong [\bT^2,-],
    \end{equation*}
    so we may as well assume we are working with the 2-torus.

    We have an exact sequence
    \begin{equation}\label{eq:bsu2zqn}
      [\bT^2,\ BSU(qn)]
      \xrightarrow{\quad}
      [\bT^2,\ BPU(qn)]
      \xrightarrow{\quad}
      [\bT^2,\ B^2\bZ/qn]
      \cong
      H^2(\bT^2,\ \bZ/qn)
      \cong
      \bZ/qn
    \end{equation}
    \cite[Corollary 3.2.7]{koch_bord} attached to the top row of \cite[\S 18.3.6]{hjjm_bdle}, where $B^2$ denotes the iterated classifying-space construction discussed in \cite[\S 7.4.6]{hjjm_bdle} for {\it abelian} groups (such as $\bZ/qn$). The second arrow in \Cref{eq:bsu2zqn} is an {\it embedding}, given that the leftmost term $[\bT^2,\ BSU(qn)]$ is trivial: it classifies degree-0 rank-$qn$ vector bundles on the 2-torus, and those are trivial \cite[p.2, Proposition]{thad_introtop}. In other words, $qn\times qn$-matrix-algebra bundles over the 2-torus are classified by the characteristic class denoted by $\beta_{qn}$ in \cite[\S 18.3.7]{hjjm_bdle} (our $qn$ being that source's $n$).

    Denote by $\widetilde{(-)}_q$ inverses modulo $q$ and similarly for $q'$. In our case, focusing on the un-primed side, the $qn\times qn$-matrix bundle in question is of the form $\cE\otimes \cE^*$ for a vector bundle $\cE\cong \cF\otimes \bC^n$ with rank-$q$ degree-$\widetilde{p}_q$ $\cF$, and the class $\beta_{qn}$ is the image of the Chern class
    \begin{equation*}
      n\widetilde{p}_q
      \in
      [\bT^2,\ B^2\bZ]
      \cong
      H^2(\bT^2,\ \bZ)
      \cong
      \bZ
    \end{equation*}
    through the natural map $\bZ\to \bZ/nq$ (arising from the vertical maps in \cite[\S 18.3.4]{hjjm_bdle}) so all in all, the isomorphism of matrix bundles simply amounts to
    \begin{equation*}
      \widetilde{p}_q n = \pm \widetilde{p'}_q n\mod qn
    \end{equation*}
    (the $\pm$ depending on whether the homeomorphism between the two spheres preserves or reverses orientations). This is in turn equivalent to
    \begin{equation*}
      qn\ |\ \widetilde{p}_q n\pm \widetilde{p'}_q n
      \iff
      q\ |\ \widetilde{p}_q\pm \widetilde{p'}_q
      \iff
      q\ |\ p\pm p'
      \iff
      \theta+\bZ = \pm \theta'+\bZ.
    \end{equation*}
  \end{enumerate}
  This concludes the proof. 
\end{th:theta3sphiso}

\begin{remarks}\label{res:charclasses}
  \begin{enumerate}[(1),wide]
  \item Note in particular that the obvious analogue of \cite[Theorem 3]{rief_irrat} does in fact go through for rational deformation parameters, giving an extension of \cite[Theorem 1.1]{hks_erg}.

  \item The literature on the classification of non-commutative tori up to isomorphism or Morita equivalence is extensive: the reader can consult, for instance, \cite{rief_case,bursz_surv,el_morita-def} and their references.

  \item\label{item:bsu2bzbz2} The proof of \Cref{th:theta3sphiso} takes for granted the following discussion on Chern classes and their relation to matrix-algebra-bundle characteristic classes.

    The fibration sequence associated (as in \cite[\S 18.3.6]{hjjm_bdle}) to the exact sequence
    \begin{equation*}
      1\xrightarrow{}SU(n)\xrightarrow{\quad}U(n)\xrightarrow{\ \det\ }\bS^1\xrightarrow{} 1
    \end{equation*}
    and (part of) the top portion of the diagram in \cite[\S 18.3.6]{hjjm_bdle} fit together into a commutative diagram
    \begin{equation}\label{eq:bsu2bzbz2}
      \begin{tikzpicture}[auto,baseline=(current  bounding  box.center)]
        \path[anchor=base] 
        (0,0) node (l) {$BSU(n)$}
        +(3,.5) node (u) {$BU(n)$}
        +(3,-.5) node (d) {$BPU(n)$}
        +(6,1) node (um) {$B\bS^1$}
        +(6,-1) node (dm) {$B^2\bZ/n$}
        +(9,.5) node (ur) {$K(\bZ,2)$}
        +(9,-.5) node (dr) {$K(\bZ/n,2)$,}
        ;
        \draw[->] (l) to[bend left=6] node[pos=.5,auto] {$\scriptstyle $} (u);
        \draw[->] (l) to[bend right=6] node[pos=.5,auto,swap] {$\scriptstyle $} (d);
        \draw[->] (u) to[bend left=6] node[pos=.5,auto] {$\scriptstyle B\det$} (um);
        \draw[->] (um) to[bend left=6] node[pos=.5,auto] {$\scriptstyle \cong$} (ur);
        \draw[->] (d) to[bend right=6] node[pos=.5,auto,swap] {$\scriptstyle $} (dm);
        \draw[->] (dm) to[bend right=6] node[pos=.5,auto,swap] {$\scriptstyle \cong$} (dr);
        \draw[->] (u) to[bend left=0] node[pos=.5,auto] {$\scriptstyle $} (d);
        \draw[->] (um) to[bend left=0] node[pos=.5,auto] {$\scriptstyle $} (dm);
        \draw[->] (ur) to[bend left=0] node[pos=.5,auto] {$\scriptstyle $} (dr);
      \end{tikzpicture}
    \end{equation}
    where
    \begin{itemize}[wide]

    \item $K(G,m)$ denotes, as usual \cite[Definition 9.6.1]{hjjm_bdle}, the $m^{th}$ {\it Eilenberg-MacLane space} with homotopy group $\pi_n\cong G$ and vanishing homotopy in other degrees;
      
    \item the left-hand vertical map is the obvious one, resulting from the quotient $U(n)\to PU(n)$ that mods out the center;

    \item the middle vertical arrow is obtained by applying the classifying-space (homotopy-)functor (\cite[\S 3.7]{rs_dgmp-2}, \cite[Proposition 8.1]{mitch_notes-2011}) $B(-)$ to the map
      \begin{equation*}
        \bS^1\xrightarrow{\quad} B\bZ/n
      \end{equation*}
      classifying the $\bZ/n$-bundle on the circle obtained via the $n$-fold cover
      \begin{equation*}
        \bS^1
        \lhook\joinrel\xrightarrow{\quad\text{central embedding}\quad}
        U(n)
        \xrightarrowdbl{\quad\det\quad}
        \bS^1;
      \end{equation*}
      
    \item and the right-hand vertical map induced by $\bZ\to \bZ/n$, well-defined only upon choosing a generator for $\bZ/n$ (so a choice is involved). 
    \end{itemize}
    For a space $X$ the upper composition
    \begin{equation*}
      \left(\text{$n$-bundles on $X$}\right)
      \xrightarrow[\cong]{\text{\cite[Assertion 18.3.2]{hjjm_bdle}}}      
      [X,\ BU(n)]
      \xrightarrow{\quad}
      [X,\ K(\bZ,2)]
      \cong
      H^2(X,\bZ)
    \end{equation*}
    simply associates the first Chern class $c_1(\cE)\in H^2(X,\bZ)$ to a vector bundle $\cE$ on $X$, whereas the bottom composition
    \begin{equation*}
      \left(\text{$n^2$-matrix bundles on $X$}\right)
      \xrightarrow[\cong]{\text{\cite[Assertion 18.3.2]{hjjm_bdle}}}      
      [X,\ BPU(n)]
      \xrightarrow{\quad}
      [X,\ K(\bZ/n,2)]
      \cong
      H^2(X,\bZ/n)
    \end{equation*}
    similarly associates the class $\beta_n(\cA)$ of \cite[\S 18.3.7]{hjjm_bdle} to a matrix-algebra bundle $\cA$.

    In conclusion, the Chern class $c_1(\cE)\in H^2(X,\bZ)$ of a vector bundle gets mapped to the $H^2(X,\bZ/n)$-characteristic class of the corresponding matrix-algebra bundle $\cE\otimes \cE^*$.

  \item\label{item:char2class} We will refer to the class $\beta_n\in H^2(X,\bZ/n)$ associated in \cite[\S 18.3.7]{hjjm_bdle} to an $n\times n$ matrix-algebra bundle on $X$ as {\it the characteristic 2-class} of the matrix bundle, to distinguish it from the {\it Dixmier-Douady} (3-)class $\alpha\in H^3(X,\bZ)$ also discussed in \cite[\S 18.3.7]{hjjm_bdle}. 
  \end{enumerate}  
\end{remarks}

\section{Azumaya theory for higher quantum spheres}\label{se:high.dim}

Much can be said about higher-dimensional rational tori $C(\bT^n_{\theta})=A_{\theta}^n$ \cite[\S 12.2]{gvf_ncg}  as well. These algebras are also Azumaya, (essentially) by \cite[Proposition 7.2]{dcp_qg}. The setup there is more algebraically-oriented so the discussion requires some translation, but nothing particularly problematic. The deformation-parameter data in \cite[\S 7]{dcp_qg} consists of
\begin{itemize}
\item a primitive $\ell^{th}$ root of unity $q$ in the ground field (which for us is $\bC$);
\item and a skew-symmetric $n\times n$ matrix $H=(h_{ij})\in M_n(\bZ)$, which will turn out to be an integer multiple of our $\theta$. 
\end{itemize}

The $n$ invertible generators $x_i$ of loc.cit. are required to skew-commute in the sense that
\begin{equation}\label{eq:xgens}
  x_j x_k = q^{h_{jk}}x_k x_j;
\end{equation}
compare this with the usual skew-commutation relation \cite[p.193]{rief_case}
\begin{equation}\label{eq:ugens}
  u_k u_j = e^{2\pi i \theta_{jk}}u_j u_k
\end{equation}
for the unitary generators of $A_{\theta}^n$. Now, if $\ell$ is a positive integer divisible by the denominators of all $\theta_{jk}$, then
\begin{equation*}
  x_{\bullet} = u_{\bullet}
  ,\quad
  q = e^{-\frac{2\pi i}{\ell}}
  ,\quad
  H = \ell\theta\in M_n(\bZ)
\end{equation*}
will effect the transition between \Cref{eq:xgens,eq:ugens}.

\cite[Proposition 7.2]{dcp_qg} (or rather the appropriately-translated version) then shows that $A_{\theta}^n$ is Azumaya of rank
\begin{equation}\label{eq:whatish}
  h=h_{\theta}:=\text{cardinality of}\quad\mathrm{range}\left(\bZ^n\xrightarrow{\quad H=\ell\theta\quad}(\bZ/\ell)^n\right),
\end{equation}
where a matrix is regarded as an operator in the usual way.

Note that although there was a choice in selecting $\ell$, $h$ itself will not depend on that choice: scaling $\ell$ to, say, $d\ell$ will also scale $H$ by $d$, thus preserving the size of its image. In fact, the number \Cref{eq:whatish} has the following alternative, direct description in terms of the matrix $\theta$ alone:

\begin{lemma}\label{le:whatish}
  For a rational skew-symmetric $\theta\in M_n(\bQ)$ the number \Cref{eq:whatish} can be recovered as the index
  \begin{equation}\label{eq:htheta}
    h_{\theta} = [(\bZ^n+\mathrm{im}~\theta)\ :\ \bZ^n].
  \end{equation}
\end{lemma}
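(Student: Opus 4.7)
The proof is a direct identification of two finite abelian quotients, related by the scalar-multiplication isomorphism $\ell\cdot(-)\colon \bQ^n\to \bQ^n$. No substantive geometric or representation-theoretic input is needed, and I do not expect a serious obstacle; the only mild care required is in keeping straight where various lattices live (in $\bZ^n$, $\bQ^n$, or $(\bZ/\ell)^n$).

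First I would rewrite the right-hand side of \Cref{eq:htheta}. Let $M:=\mathrm{im}(\ell\theta)\subseteq \bZ^n$, which makes sense because $\ell\theta\in M_n(\bZ)$. Since $\ell\cdot\mathrm{im}(\theta)=\mathrm{im}(\ell\theta)=M$ as subgroups of $\bQ^n$, and since multiplication by $\ell$ is a $\bQ$-linear automorphism of $\bQ^n$ sending $\bZ^n$ onto $\ell\bZ^n$ and $\bZ^n+\mathrm{im}(\theta)$ onto $\ell\bZ^n+M$, it preserves indices of nested subgroups. Thus
\begin{equation*}
  [(\bZ^n+\mathrm{im}\,\theta):\bZ^n]
  =
  [(\ell\bZ^n+M):\ell\bZ^n]
  =
  |(M+\ell\bZ^n)/\ell\bZ^n|
  =
  |M/(M\cap\ell\bZ^n)|.
\end{equation*}

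Next I would compute the left-hand side of \Cref{eq:htheta} in the same language. The number $h$ in \Cref{eq:whatish} is by definition the cardinality of the range of the composite
\begin{equation*}
  \bZ^n\xrightarrow{\ \ell\theta\ } \bZ^n\xrightarrowdbl{\quad\mathrm{mod}\,\ell\quad}(\bZ/\ell)^n,
\end{equation*}
and this range is exactly $(M+\ell\bZ^n)/\ell\bZ^n\cong M/(M\cap\ell\bZ^n)$. Comparing with the previous display finishes the proof.

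One should perhaps include a one-sentence remark recording that the computation confirms the $\ell$-independence already noted in the paragraph preceding the lemma: the expression on the right of \Cref{eq:htheta} manifestly makes no reference to $\ell$, so this reconciles the two pictures. No case analysis, cohomological machinery, or appeal to the Azumaya statement is needed; the argument is essentially a one-line manipulation of lattice indices once the correct bijection $\ell\cdot(-)$ is written down.
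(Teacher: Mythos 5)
Your proof is correct, but takes a genuinely different route from the paper's. The paper's proof first applies the Bourbaki normal form for rational skew-symmetric bilinear forms, reducing $\theta$ (after an integral change of basis) to a direct sum of a zero block and a block $\begin{pmatrix}0 & D \\ -D & 0\end{pmatrix}$ with $D = \mathrm{diag}(p_i/q_i)$ in lowest terms, and then observes that both $h_\theta$ and $[(\bZ^n+\mathrm{im}\,\theta):\bZ^n]$ evaluate to $\prod_i q_i^2$. Your argument, by contrast, is normal-form-free: it identifies both sides with $|M/(M\cap\ell\bZ^n)|$ for $M=(\ell\theta)(\bZ^n)$ via the scaling automorphism $\ell\cdot(-)$ of $\bQ^n$ and the second isomorphism theorem. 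Your version is more elementary in that it never uses skew-symmetry (only rationality), which is mildly clarifying. What the paper's route buys that yours does not is twofold: it makes the perfect-square nature of $h_\theta$ manifest (the paper records this in the Remark immediately after, pointing out that $h_\theta=\prod q_i^2$), and the same normal-form computation is recycled for \Cref{le:hightoridec} to decompose $A^n_\theta$ as a tensor product of $C(\bT^k)$ and two-dimensional quantum tori. So while your lattice-index manipulation is a cleaner proof of the lemma in isolation, the paper's normal-form detour is doing extra work in the surrounding development. One small presentational point: you should state explicitly that $\mathrm{im}\,\theta$ in \Cref{eq:htheta} means $\theta(\bZ^n)\subseteq\bQ^n$ (not $\theta(\bR^n)$), as the index would otherwise be infinite; you use this implicitly when writing $\ell\cdot\mathrm{im}(\theta)=\mathrm{im}(\ell\theta)$.
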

\begin{proof}
  We can assume \cite[\S IX.5.1, Th\'eor\`eme 1]{bourb_alg-09} that $\theta$ is block-diagonal, consisting of a zero block and one of the form
  \begin{equation}\label{eq:ddmat}
    \begin{pmatrix}
      \phantom{-}0&D\\
      -D&0
    \end{pmatrix}    
  \end{equation}
  for a diagonal matrix
  \begin{equation*}
    D=\mathrm{diag}\left(\frac{p_1}{q_1},\ \frac{p_2}{q_2},\ \cdots\right)
  \end{equation*}
  with lowest-terms non-zero entries. \Cref{eq:whatish,eq:htheta} are now easily seen to both be equal to $\prod_i q_i^2$. 
\end{proof}

\begin{remark}
  The proof of \Cref{le:whatish} also makes it plain that \Cref{eq:whatish} is indeed a square, as implicit in the discussion: it is the common dimension of the matrix-algebra fibers of an Azumaya algebra. 
\end{remark}

Still assuming $\theta$ rational, we have 

\begin{notation}\label{not:tuple.pows}
  Consider a skew-symmetric matrix $\theta\in M_n(\bR)$ (usually rational).

  \begin{enumerate}[(1),wide]
  \item For the tuple $\mathbf{u}:=(u_i)_{i=1}^n$ of generators of $A^n_{\theta}$ and integers $\mathbf{m}=(m_i)_{i=1}^n$ we write
    \begin{equation*}
      \mathbf{u}^{\mathbf{m}}
      :=
      \prod_{i=1}^n u_i^{m_i}.
    \end{equation*}
    The product is to be understood as ordering the indices increasingly unless otherwise specified, but at no point will the ordering in fact matter: we will mostly be interested in $C^*$-subalgebras of $A^n_{\theta}$ generated by such products, and a reordering simply scales by a modulus-1 complex number.

  \item The \emph{integral kernel} of $\theta$ is 
    \begin{equation*}
      \theta^{\perp}
      :=
      \left\{\mathbf{m}\in \bZ^n\ :\ \theta\mathbf{m}\in \bZ^n\right\},
    \end{equation*}
    regarding $\theta$ as a linear endomorphism on $\bR^n$.

  \item For $F\subseteq [n]$ write
    \begin{itemize}[wide]
    \item $\theta|_F$ for the $|F|\times |F|$ matrix consisting the $\theta$-entries in $F$-labeled rows and columns;

      
    \item and $h_{\theta,F}:=h_{\theta|_F}$, with $h_{\bullet}$ as in \Cref{eq:htheta}.
    \end{itemize}
  \end{enumerate}
\end{notation}

The realization of $A^n_{\theta}$ as a cocycle twist of the group algebra of $\bZ^n$ \cite[\S 2.2]{el_morita-def}, coupled with the proof of \cite[Lemma 2.3]{el_kproj}, describing the center of such a twisted group algebra, gives
\begin{equation}\label{eq:rat.tor.cent}
  Z(A^n_{\theta})
  =
  C^*\left(
    \mathbf{u}^{\theta^{\perp}}
  \right)
  :=
  C^*\left(
    \mathbf{u}^{\mathbf{m}}
    \ :\
    \mathbf{m}\in \theta^{\perp}
  \right)
  \overset{\theta\in M_n(\bQ)}{\cong}  
  C^*(\bZ^n)
  \cong
  C(\bT^n)
\end{equation}
By the Azumaya claim we can once more realize $A^n_{\theta}$ as sections of a bundle of dimension-$h_{\theta}$ matrix algebras over $\bT^n$ for the $h_{\theta}$ of \Cref{eq:htheta}.


\begin{remark}\label{re:highnotflat}
  In dealing with vector bundles over higher-dimensional tori, one additional complication not visible for $n=2$ (or $n=3$, for that matter) is the fact that the cohomological classification recalled in \cite[Remark 4.4(1)]{cp_cont-equiv_xv3} no longer goes through: as observed in \cite[introductory remarks, p.247]{oda_vb-ab} and elaborated in \cite[\S 3]{oda_vb-ab}, there are non-flat rank-2 vector bundles on complex 2-dimensional tori (i.e. {\it four}-dimensional when regarded as real tori) with vanishing Chern classes.

  Such pathologies thus occur as soon as one can reasonably expect them: whenever the torus and bundle are large enough for both first Chern classes $c_1$ and $c_2$ to come into play, i.e. the bundle has rank $\ge 2$ and the torus dimension $\ge 4$. 
\end{remark}

Incidentally, the proof of \Cref{le:whatish} also shows that rational non-commutative tori are decomposable into small-dimensional pieces:

\begin{lemma}\label{le:hightoridec}
  For a skew-symmetric matrix $\theta\in M_n(\bQ)$ the corresponding non-commutative torus algebra $A^n_{\theta}$ decomposes as
  \begin{equation*}
    A^n_{\theta}\cong C(\bT^k)\otimes\bigotimes_{i=1}^t A^{2}_{\theta_i}
  \end{equation*}
  for rationals $\theta_i\in \bQ$ and $n=k+2t$.  
\end{lemma}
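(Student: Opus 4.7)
The approach leverages the Bourbaki symplectic normal form already invoked in the proof of \Cref{le:whatish}, translated into the language of twisted group $C^*$-algebras. The plan is to realize the decomposition as a direct consequence of block-diagonalizing $\theta$ by an integer change of coordinates on $\bZ^n$.

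First I would clear denominators: pick $N\in \bZ_{>0}$ with $N\theta\in M_n(\bZ)$ and apply \cite[\S IX.5.1, Th\'eor\`eme 1]{bourb_alg-09} to produce $P\in GL_n(\bZ)$ such that $P^T(N\theta)P$ is block-diagonal, with a zero block of size $k$ and $2\times 2$ symplectic blocks having nonzero integer entries $\pm d_i$ for $i=1,\dots,t$, where $n=k+2t$. Dividing by $N$ preserves the shape, so $P^T\theta P$ has the same block-diagonal form with rational off-diagonal entries $\theta_i:=d_i/N\in\bQ$.

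Next I would translate this matrix decomposition into an algebra decomposition via the realization $A^n_\theta\cong C^*(\bZ^n,\sigma_\theta)$ as a cocycle-twisted group algebra, as recalled just before \Cref{eq:rat.tor.cent}. The invertible $P\in GL_n(\bZ)$ is a group automorphism of $\bZ^n$, and pulling back the cocycle along $P$ furnishes an isomorphism $C^*(\bZ^n,\sigma_\theta)\cong C^*(\bZ^n,\sigma_{P^T\theta P})$. Because $P^T\theta P$ is block-diagonal, the cocycle $\sigma_{P^T\theta P}$ is the external product of cocycles on the direct summands $\bZ^k$ (with trivial cocycle) and the various $\bZ^2$. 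The standard decomposition of a twisted group $C^*$-algebra over a direct sum of groups with a product cocycle then yields
\begin{equation*}
  A^n_\theta
  \cong
  C^*(\bZ^n,\sigma_{P^T\theta P})
  \cong
  C^*(\bZ^k)\otimes\bigotimes_{i=1}^t C^*(\bZ^2,\sigma_{\theta_i})
  \cong
  C(\bT^k)\otimes\bigotimes_{i=1}^t A^2_{\theta_i}.
\end{equation*}

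The only non-trivial point is ensuring that the $C^*$-tensor products in the final step are unambiguous; this is immediate from nuclearity of the commutative tensorand $C(\bT^k)$ (or of non-commutative tori generally, per \cite[\S 12.2]{gvf_ncg}). The remaining content is routine bookkeeping: verifying that the basis change $P$, being block-permuting by construction, makes the pulled-back cocycle factor as claimed across the direct-sum decomposition of $\bZ^n$.
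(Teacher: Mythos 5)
Your proof is correct and follows essentially the same route as the paper: both invoke the Bourbaki integer normal form for alternating forms (\cite[\S IX.5.1, Th\'eor\`eme 1]{bourb_alg-09}, already cited in the proof of \Cref{le:whatish}) to block-diagonalize $\theta$ by a $GL_n(\bZ)$ change of basis, then pass through the cocycle-twisted group-algebra realization $A^n_\theta\cong C^*(\bZ^n,\sigma_\theta)$ to split the tensor product. The paper treats the resulting algebra decomposition as immediate; you record the cocycle-factorization step explicitly, which is the right supporting detail. One small imprecision: describing $P$ as ``block-permuting by construction'' is not quite right — $P$ is a general unimodular matrix, not a permutation, and the normal form \Cref{eq:ddmat} is block-antidiagonal rather than a direct sum of $2\times 2$ blocks, so a further coordinate reordering (a genuine permutation, harmlessly absorbed into $P$) is needed to reach the $\bZ^k\oplus\bZ^2\oplus\cdots\oplus\bZ^2$ form. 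What matters, and what you correctly use, is that after the change of basis the matrix is block-diagonal, at which point $\sigma_{P^T\theta P}$ factors across the corresponding direct-sum decomposition of $\bZ^n$.
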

\begin{proof}
  Immediate from the observation made in passing in the proof of \Cref{le:whatish} that after a change of (integer) basis $\theta$ is a direct sum of a ($k\times k$, say) zero matrix and a ($2t\times 2t$) matrix of the form \Cref{eq:ddmat}.
\end{proof}

It furthermore follows that \Cref{re:highnotflat} (and \cite[Remark 4.4(1)]{cp_cont-equiv_xv3}) notwithstanding, the bundles relevant to the Azumaya structure of $A^n_{\theta}$ are relatively well-behaved. To make sense of the statement, recall \cite[\S\S I.2 and I.4]{kob_cplx} the notion of a {\it projectively flat} vector bundle over a space $X$ (either plain, i.e. with structure group $G=GL(r)$, or {\it Hermitian} \cite[\S I.4]{kob_cplx}, i.e. with structure group $G=U(r)$):
\begin{itemize}[wide]
\item consider the {\it principal} $G$-bundle $P\to X$ associated to the vector bundle $E$ in question, via the usual \cite[\S 18.2, Assertion 2.3]{hjjm_bdle} correspondence;

\item the quotient $\overline{P}:=P/Z(G)$ by the center of $G$ (scalar matrices, so either $\bC^{\times}$ when $G=GL(r)$ or $\bS^1$ for $G=U(r)$) is then a principal bundle over the {\it projective} group
  \begin{equation*}
    PG:=G/Z(G)=PGL(r)\text{ or }PU(r);
  \end{equation*}

\item and the original bundle is projectively flat if $\overline{P}$ is flat in the usual sense \cite[Proposition I.2.6]{kob_cplx}. 
\end{itemize}

\begin{proposition}\label{pr:isprojfl}
  For skew-symmetric $\theta\in M_n(\bQ)$ we have
  \begin{equation*}
    A^n_{\theta}
    \cong
    \End(\cE)
    =
    \Gamma(\cE\otimes \cE^*)
    :=
    \left(\text{continuous sections of the bundle }\cE\otimes \cE^*\right)
  \end{equation*}
  for a projectively flat bundle $\cE$ on $\bT^n\cong\Max(Z(A^n_{\theta}))$ of rank $\sqrt{h_{\theta}}$ for $h_{\theta}=\text{\Cref{eq:htheta}}$.

  In particular, $\cE\otimes \cE^*$ itself is a flat matrix-algebra bundle. 
\end{proposition}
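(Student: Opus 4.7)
The plan is to decompose $A^n_\theta$ via \Cref{le:hightoridec} as $C(\bT^k) \otimes \bigotimes_{i=1}^t A^2_{\theta_i}$ with $\theta_i = p_i/q_i$ in lowest terms, and then build $\cE$ as the external tensor product of the trivial line bundle on $\bT^k$ with rank-$q_i$ projectively flat bundles $\cE_i$ on $\bT^2$. Tensor products of flat $PU$-principal bundles are flat (the locally constant cocycles tensor to a locally constant cocycle valued in $PU(\prod_i q_i)$), so the resulting $\cE$ is automatically projectively flat; its total rank $\prod_i q_i$ equals $\sqrt{h_\theta}$ by the block-diagonalization of $\theta$ recorded in the proof of \Cref{le:whatish}. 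Compatibility of $\End(-)$ with external tensor products reduces matters to constructing each $\cE_i$ separately.

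For the main case $A^2_{p/q}$, I would use the classical \emph{Heisenberg bundle} construction. Let $U, V \in U(q)$ be the standard clock and shift operators on $\bC^q$ satisfying $UV = e^{2\pi i p/q}\, VU$. The composition relation $U^{m'}V^{n'} U^m V^n = e^{-2\pi i p m n'/q}\, U^{m+m'} V^{n+n'}$ involves only a central phase, so $(m,n) \mapsto [U^m V^n]$ descends to an honest group homomorphism $\bZ^2 \to PU(q)$. Via $\pi_1(\bT^2) \cong \bZ^2$ this yields a flat $PU(q)$-principal bundle $\overline{P}$ on $\bT^2$; since $H^3(\bT^2, \bZ) = 0$, the Dixmier-Douady obstruction to lifting $\overline{P}$ to a $U(q)$-principal bundle vanishes, and any such lift gives a projectively flat rank-$q$ vector bundle $\cE_i$ by construction.

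The identification $\End(\cE_i) \cong A^2_{p/q}$ as $C(\bT^2)$-algebras then proceeds by presenting $\Gamma(\End(\cE_i))$ as the $\bZ^2$-equivariant $M_q(\bC)$-valued continuous functions on $\bR^2$ (equivariance via conjugation by $U^m V^n$), and exhibiting explicit equivariant lifts $\widetilde{u}_1, \widetilde{u}_2 \colon \bR^2 \to M_q(\bC)$ that satisfy the commutation relation $\widetilde{u}_2 \widetilde{u}_1 = e^{2\pi i p/q}\, \widetilde{u}_1 \widetilde{u}_2$ and whose $q$-th powers are the scalar-valued functions $(x_1, x_2) \mapsto e^{2\pi i x_1}$ and $e^{2\pi i x_2}$, matching the central generators $u_1^q, u_2^q$ of \Cref{eq:rat.tor.cent}. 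A dimension count over the center (both sides being free of rank $q^2 = h_{\theta_i}$) closes the argument.

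The concluding flatness of $\cE \otimes \cE^* \cong \End(\cE)$ is then formal: the conjugation action $U(q) \to \Aut M_q(\bC)$ factors through $PU(q)$, so the locally constant $PU$-cocycle defining $\overline{P}$ pushes forward to a locally constant cocycle defining $\End(\cE)$. The main technical obstacle is the explicit verification of the isomorphism $\End(\cE_i) \cong A^2_{p/q}$ on each small factor---tracking the generators and their commutation relation through the equivariant-sections picture; once that matching is pinned down, everything else (the reduction to $n = 2$ and the formal flatness of the endomorphism bundle) is routine.
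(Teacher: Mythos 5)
Your proposal is correct, but it takes a genuinely different route from the paper on the key $n=2$ step, and it is worth noting what each buys.

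Both arguments reduce to non-commutative $2$-tori via \Cref{le:hightoridec} and handle the general $n$ by observing that external tensor products of projectively flat bundles are projectively flat (you phrase this cocycle-theoretically; the paper does so via the curvature characterization \cite[Proposition I.2.8 and \S I.5, (5.15)]{kob_cplx}). Where you diverge is in producing $\cE_i$ for $A^2_{p/q}$. The paper does \emph{not} construct the bundle from scratch: it already knows from \cite[Proposition 7.2]{dcp_qg} that $A^2_{p/q}$ is Azumaya of rank $q^2$, hence the section algebra of a matrix-algebra bundle; because $H^3(\bT^2,\bZ)=0$ this bundle is $\cE_i\otimes\cE_i^*$ for some rank-$q$ vector bundle $\cE_i$, and the paper then simply invokes the classification of vector bundles on a compact orientable surface by rank and Chern class (\cite[Proposition, p.2]{thad_introtop}) to conclude that \emph{every} such $\cE_i$ is projectively flat (a rank-$q$ bundle on $\bT^2$ splits as a line bundle plus a trivial summand). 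Your clock-and-shift/Heisenberg construction instead builds a flat $PU(q)$-bundle explicitly, lifts it using $H^3(\bT^2,\bZ)=0$, and re-derives the identification $\End(\cE_i)\cong A^2_{p/q}$ via equivariant $M_q$-valued functions. This is essentially the construction the paper defers to \Cref{re:isflatref} (following \cite[Proposition 12.2]{gvf_ncg} and \cite[\S 2]{hks_erg}); it is more self-contained, at the cost of re-proving a chunk of what the Azumaya result already supplies. One small point worth tightening in your writeup: ``a dimension count over the center... closes the argument'' should be replaced with the observation that your equivariant lifts induce a unital $Z$-algebra map between two rank-$q^2$ Azumaya $C(\bT^2)$-algebras restricting to the identity on the center, and such a map is automatically an isomorphism; a bare module-rank count does not by itself yield bijectivity.
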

\begin{proof}
  That the rank is as claimed has already been noted above, as a consequence of \cite[Proposition 7.2]{dcp_qg}. \Cref{le:hightoridec} reduces the problem to non-commutative 2-tori. Indeed, it shows that $\cE$ decomposes as an {\it external (or exterior) tensor product} \cite[\S 2.6]{at_k}, and external tensor products preserve projective flatness: the latter is definable \cite[Proposition I.2.8]{kob_cplx} as the existence of a connection whose curvature takes scalar values, and there is a simple formula for the curvature of the tensor product of the natural two connections \cite[\S I.5, (5.15)]{kob_cplx}.
  
  The conclusion now follows from the fact that {\it all} complex vector bundles on a 2-torus (and indeed a compact orientable surface) are projectively flat: being completely classified by rank and Chern class (\cite[Proposition, p.2]{thad_introtop} again) a rank-$q$ bundle splits as the sum between a line bundle and a trivial rank-$(q-1)$ bundle, etc.
  
  As for the last claim on flatness, it follows from the general remark that $\cE\otimes \cE^*$ is flat whenever $\cE$ is projectively flat \cite[Propositions I.2.9 and/or I.4.23]{kob_cplx}.
\end{proof}

\begin{remark}\label{re:isflatref}
  The flatness of the relevant $q\times q$ matrix bundle for $n=2$ and $\theta=\frac pq$ is plain from its direct construction in the proof of \cite[Proposition 12.2]{gvf_ncg} (or \cite[\S 2]{hks_erg}, on which the later account is based): the bundle is obtained as the (total space of the) quotient
  \begin{equation*}
    (\bT^2\times M_q)/(\bZ/q)^2
  \end{equation*}
  through the diagonal action, where
  \begin{equation*}
    (\bZ/q)^2\subset \bT^2
  \end{equation*}
  acts on the torus via translation by the $q$-torsion subgroup and on $M_q$ as described in loc.cit. (the two generators act by conjugation by two order-$q$ unitary matrices that commute up to scalars, etc.). 
\end{remark}

A few reminders will help make sense of the statement of \Cref{th:s2n1thetapi}, a higher-dimensional generalization of \cite[Proposition 4.5]{cp_cont-equiv_xv3}.

\begin{itemize}[wide]
\item The \emph{PI-degree} (\cite[Definitions A.7.1.8 and B.4.15]{df_pi}, \cite[\S 1]{zbMATH04101395}) of a PI algebra is the largest $n$ for which the polynomial identities of $A$ are among those of $M_n$. Rank-$n^2$ Azumaya algebras, for instance, have PI-degree $n$ (as observed \cite[post Definition B.4.15]{df_pi}). 

  The slight definition variations one typically encounters in the literature will not make a difference here, so the above will do.
  
\item The {\it Azumaya locus} of an algebra $A$ consists of those maximal ideals
  \begin{equation*}
    \fm\in \Max \left(Z:=Z(A)\right)
    :=
    \text{maximal spectrum of the center }Z
  \end{equation*}
  for which the \emph{localization} \cite[Example (1) post Corollary 3.2]{am_comm} $A_{\fm}$ is Azumaya over $Z_{\fm}$.
  
  There are again minor departures from this precise setup in the literature (one might, for instance, consider all \emph{prime} ideals with the requisite property \cite[\S 1]{zbMATH05905742}, impose additional finiteness/primality constraints on the algebra $A$ \cite[\S III.1.7]{bg_algqg}, etc.). 
\end{itemize}

Set
\begin{equation}\label{eq:iy.ideal}
  I_Y:=\left\{f|_Y\equiv0\right\}
  \quad
  \trianglelefteq
  \quad
  \mathrm{Cont}_{\partial}
  \left(
    \bS_+^{n-1}\xrightarrow{\quad}C\left(\bT^n_{\theta}\right)
  \right)
  \quad
  \overset{\text{\Cref{eq:sph2tor}}}{\cong}
  \quad
  C_{\theta}
  ,\quad
  Y=\overline{Y}\subseteq \bS^{n-1}_+
\end{equation}
and
\begin{equation}\label{eq:ct.zt.res}
  C_{\theta}|_Y:=C_{\theta}/I_Y
  ,\quad
  Z_{\theta}|_Y:=Z_{\theta}/\left(I_Y\cap Z_{\theta}\right).
\end{equation}

For subsets $S\subseteq \bZ^n$ and $F\subseteq [n]$ we write
\begin{equation*}
  S_{\downarrow F}
  :=
  \left\{
    \mathbf{s}=(s_i)_{i=1}^n\in S
    \ :\
    s_i=0,\ \forall i\not\in F
  \right\};
\end{equation*}
the portion of $S$ \emph{supported on $F$}, in other words. This applies to the symbol $\theta^{\perp}_{\downarrow F}$ employed below. 

\begin{theorem}\label{th:s2n1thetapi}
  Let $\theta\in M_n(\bQ)$ be an $n\times n$ skew-symmetric rational matrix for some fixed $n\ge 2$.
  \begin{enumerate}[(1),wide]
    
  \item\label{item:th:s2n1thetapi:s2n1-fincent} The center of the non-commutative sphere algebra $C_{\theta}:=C(\bS^{2n-1}_{\theta})$ is 
    \begin{align*}
      Z_{\theta}
      &:=Z\left(C\left(\bS_{\theta}^{2n-1}\right)\right)\\
      &\cong
        \mathrm{Cont}_{\partial}\left(\bS^{n-1}_{+}\xrightarrow{\quad} Z(C(\bT^n_{\theta}))\right)\numberthis\label{eq:cents2n1theta}\\
      &=
        \left\{
        \bS^{n-1}_+
        \xrightarrow[\text{cont.}]{\quad f\quad}
        C^*\left(\mathbf{u}^{\theta^{\perp}}\right)
        \ :\
        f\left(\bS^{n-1}_{+,F}\right)
        \in
        C^*\left(\mathbf{u}^{\theta^{\perp}_{\downarrow F}}\right),\ \forall F\subseteq [n]
        \right\}.
    \end{align*}   
    
  \item\label{item:th:s2n1thetapi:brnch.cvr.sph} The spectrum $X_{\theta}:=\Max(Z_{\theta})$ branch-covers the classical sphere
    \begin{equation}\label{eq:lcm.sph}
      \bS^{2n-1}
      \cong
      \Max
      \left\{
        \bS^{n-1}_+
        \xrightarrow[\text{cont.}]{\quad f\quad}
        C^*\left(u_i^{q_i}\right)
        \ :\
        f\left(\bS^{n-1}_{+,F}\right)
        \in
        C^*\left(u_i^{q_i},\ i\in F\right),\ \forall F\subseteq [n]
      \right\}
    \end{equation}
    for
    \begin{equation*}
      q_i:=\lcm\left(\text{lowest-term denominators of }\theta_{ij},\ j\in [n]\right). 
    \end{equation*}
    
  \item\label{item:th:s2n1thetapi:s2n1-ispi} $C_{\theta}$ embeds into $M_{\sqrt{h_{\theta}}}\left(Z_{\theta}\right)$ with $h_{\theta}$ as in \Cref{eq:htheta} and is not a $Z_{\theta}$-subalgebra of $M_n(Z_{\theta})$ for any smaller $n$. In particular, $C_{\theta}$ is a PI algebra of PI-degree $\sqrt{h_{\theta}}$.
    
    
    
  \end{enumerate}  
\end{theorem}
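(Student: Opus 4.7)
My plan is to compute $Z_\theta$ pointwise from the fibred description \eqref{eq:sph2tor}. A section $f:\bS^{n-1}_+\to C(\bT^n_\theta)$ is central in $C_\theta$ iff at each $x$ it commutes with all values $g(x)$ of other admissible sections. At an interior $x$ (all $s_i>0$) the values $g(x)$ sweep out all of $C(\bT^n_\theta)$, so centrality forces $f(x)\in Z(C(\bT^n_\theta))=C^*(\mathbf{u}^{\theta^\perp})$ by \eqref{eq:rat.tor.cent}. Continuity then forces boundary values at $x\in\bS^{n-1}_{+,F}$ to lie in $C^*(\mathbf{u}^{\theta^\perp})\cap C^*(u_i:i\in F)$, which by inspection of monomials is exactly $C^*(\mathbf{u}^{\theta^\perp_{\downarrow F}})$, yielding \eqref{eq:cents2n1theta}.

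\textbf{Part (2).} By the definition of $q_i$ we have $q_i e_i\in\theta^\perp$, so $u_i^{q_i}\in C^*(\mathbf{u}^{\theta^\perp})$ and the commutative subalgebra $C^*(u_i^{q_i}:i\in[n])\cong C(\bT^n)$ sits centrally in $C(\bT^n_\theta)$, compatibly with the face-supported subalgebras. Feeding this inclusion into the description of part (1) exhibits the RHS of \eqref{eq:lcm.sph} as a subalgebra of $Z_\theta$. Renaming $v_i:=u_i^{q_i}$ shows that this subalgebra is exactly the $\theta=0$ specialization of \eqref{eq:sph2tor}, i.e.\ $C(\bS^{2n-1})$. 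Since the commutative extension $C^*(u_i^{q_i})\subseteq C^*(\mathbf{u}^{\theta^\perp})$ is finite of rank $[\theta^\perp:\bigoplus_i q_i\bZ e_i]$, the induced spectrum map $X_\theta\to\bS^{2n-1}$ is a finite-to-one branched cover.

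\textbf{Part (3).} I would treat the PI-degree and the embedding separately. For the lower bound on the PI-degree, evaluating at an interior $x$ gives a surjection $C_\theta\twoheadrightarrow C(\bT^n_\theta)$, and the latter is Azumaya of rank $h_\theta$ over its centre by \cite[Proposition 7.2]{dcp_qg} together with \Cref{le:whatish}, hence has PI-degree $\sqrt{h_\theta}$; since polynomial identities descend to quotients, $\operatorname{PI-deg}(C_\theta)\ge\sqrt{h_\theta}$. For the embedding $C_\theta\hookrightarrow M_{\sqrt{h_\theta}}(Z_\theta)$, my plan is to use \Cref{le:hightoridec} to split $C(\bT^n_\theta)$ into a commutative factor and $2$-dimensional blocks $A^2_{p_i/q_i}$, represent each block by standard clock/shift $q_i\times q_i$ matrices, and assemble the tensor product together with radial coordinates on $\bS^{n-1}_+$ into matrix representatives for the normal generators $t_j$, producing the required $Z_\theta$-algebra map into $M_{\sqrt{h_\theta}}(Z_\theta)$. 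The PI-degree upper bound follows; minimality of $\sqrt{h_\theta}$ is then obtained by pulling any hypothetical $Z_\theta$-algebra embedding $C_\theta\hookrightarrow M_n(Z_\theta)$ back to a maximal ideal in the Azumaya locus, where the fibre embedding $M_{\sqrt{h_\theta}}(\bC)\hookrightarrow M_n(\bC)$ forces $n\ge\sqrt{h_\theta}$.

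\textbf{Main obstacle.} The crux is the embedding step in part (3). Over $\bT^n=\operatorname{Max}Z(C(\bT^n_\theta))$ the matrix-algebra bundle of $C(\bT^n_\theta)$ carries a generically non-trivial characteristic $2$-class (as signalled by \Cref{re:highnotflat} and the discussion in \Cref{res:charclasses}), so a naive attempt to write the generators $u_i$ as matrices over $C(\bT^n)$ fails. The key to the plan is that the radial ``collapse'' of tori at the faces of $\bS^{n-1}_+$ combined with the branched-cover structure $X_\theta\to\bS^{2n-1}$ from part (2) provides enough extra room to globalize the fibrewise clock/shift model; verifying this carefully—that the proposed matrix entries really do lie in $Z_\theta$ rather than on a finer cover—is the heart of the argument.
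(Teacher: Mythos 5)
Your Parts (1) and (2) follow essentially the paper's route, with two qualifications. For (1) the paper simply combines \Cref{eq:rat.tor.cent} with \Cref{eq:sph2tor}; your interior-evaluation-plus-continuity argument is the same thing spelled out in more detail (and it is important, as you implicitly note, that the binding constraint on boundary values is continuity into $Z(A^n_\theta)$, not centrality in the smaller face-torus, since $\theta^\perp_{\downarrow F}$ is in general a proper subgroup of $(\theta|_F)^\perp$). For (2) the paper does \emph{not} simply observe that $C^*(u_i^{q_i})\subseteq C^*(\mathbf{u}^{\theta^\perp})$ is a finite extension and conclude; it constructs a compatible family of finite-index conditional expectations across all the face subalgebras $C^*(t_i,\ i\in F)$, assembles them into a conditional expectation on the glued $\mathrm{Cont}_\partial$-algebras, and invokes \cite[Theorem 1.4]{bg_cx-exp} to get openness and boundedness of the spectrum map. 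Your phrase ``finite\ldots hence branched cover'' elides precisely the point that needs the conditional-expectation machinery: finiteness at each torus level does not automatically yield openness of the induced map of glued spectra.

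The real divergence is in (3), and your own ``main obstacle'' paragraph correctly locates the trouble: your plan hinges on a global clock/shift matrix model for the bundle underlying $A^n_\theta$, but that bundle is projectively flat with a generically non-trivial characteristic $2$-class (\Cref{pr:isprojfl}, \Cref{res:charclasses}), so there is no $Z(A^n_\theta)$-algebra embedding of $A^n_\theta$ into $M_{\sqrt{h_\theta}}\bigl(Z(A^n_\theta)\bigr)$ in general, and the suggestion that the branched-cover structure on $X_\theta$ provides ``enough extra room'' is left unsubstantiated --- this is exactly the step that would fail as written. The paper avoids the obstacle entirely: it shows that every irreducible representation of $C_\theta$ has dimension $\le\sqrt{h_\theta}$ (via \Cref{eq:sph2tor} and \Cref{pr:isprojfl}), i.e.\ that $C_\theta$ is sharply $\sqrt{h_\theta}$-subhomogeneous, produces the sharpness by passing to an interior fibre $C_\theta\twoheadrightarrow A^n_\theta\twoheadrightarrow M_{\sqrt{h_\theta}}$, and then appeals to the standard embedding of a subhomogeneous $C^*$-algebra into matrices (\cite[\S 2.7.3]{dixc}) to deliver the $M_{\sqrt{h_\theta}}(Z_\theta)$-embedding without ever trivializing the bundle. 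Your lower-bound argument (interior fibre forces any $Z_\theta$-embedding into $M_n(Z_\theta)$ to have $n\ge\sqrt{h_\theta}$, and PI-identities descend to quotients) coincides with the paper's and is fine; it is only the existence of the embedding that needs the subhomogeneity route rather than an explicit construction.
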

\begin{proof}  
  \begin{enumerate}[label={},wide]
  \item\textbf{\Cref{item:th:s2n1thetapi:s2n1-fincent}} We have already recalled \Cref{eq:rat.tor.cent} that the centers $Z(C(\bT^n_{\theta}))$ of the non-commutative torus algebras are as claimed, so the conclusion follows from the identification \Cref{eq:sph2tor}.

  \item\textbf{\Cref{item:th:s2n1thetapi:brnch.cvr.sph}} That the right-hand side of \Cref{eq:lcm.sph} is indeed a sphere is a simple topology exercise (the classical counterpart to \Cref{eq:sph2tor}), given that the $u_i^{q_i}\in A^n_{\theta}$ are central and hence the generators of a $C(\bT^n)$. The claim is that the map
    \begin{equation*}
      X_{\theta}
      \xrightarrowdbl[\quad\text{onto}\quad]{\quad\pi\quad}
      \bS^{2n-1}
    \end{equation*}
    dualizing the embedding
    \begin{equation}\label{eq:s2x}
      \begin{aligned}
        C\left(\bS^{2n-1}\right)
        &\cong
          \mathrm{Cont}_{\partial}
          \left(
          \bS^{n-1}_{+}
          \xrightarrow{\quad}
          Z_{\downarrow}\left(A^n_{\theta}\right)
          :=
          C^*\left(u_i^{q_i}\right)
          \right)\\
        &:=
          \left\{
          \bS^{n-1}_+
          \xrightarrow[\text{cont.}]{\quad f\quad}
          Z_{\downarrow}\left(A^n_{\theta}\right)
          \ :\
          f\left(\bS^{n-1}_{+,F}\right)
          \in
          C^*\left(u_i^{q_i},\ i\in F\right),\ \forall F\subseteq [n]
          \right\}\\
        &\lhook\joinrel\xrightarrow{\qquad}
          \text{\Cref{eq:cents2n1theta}}
          =:C(X_{\theta})
      \end{aligned}      
    \end{equation}
    is a \emph{branched cover} in the sense of \cite[\S 1]{pt_brnch}: an open surjection of compact Hausdorff spaces, with a finite upper bound on the cardinalities of the fibers $\pi^{-1}(p)$, $p\in \bS^{2n-1}$. Surjectivity is not at issue, so it is the two other requirements that require attention. Rather than attack that classical statement directly, we appeal to the theory of \emph{non-commutative branched covers} developed in \cite{pt_brnch,bg_cx-exp}, revolving around the notion of a $C^*$ \emph{conditional expectation} \cite[Definition II.6.10.1]{blk}.

    We have embeddings
    \begin{equation*}
      Z_{\downarrow}\left(A^n_{\theta}\right)
      \lhook\joinrel\xrightarrow{\quad}
      Z\left(A^n_{\theta}\right)
      \lhook\joinrel\xrightarrow{\quad}
      C^*\left(\bZ^n\right)
      \cong
      C^*\left(t_i,\ i\in [n]\right)
      ,\quad
      t_i^{q_i}=u_i^{q_i}.
    \end{equation*}
    The usual \cite[Proposition 8.5]{pis_os} (plainly of \emph{finite index} \cite[Definition 2]{fk_fin-ind}) expectation
    \begin{equation*}
      C^*\left(t_i,\ i\in [n]\right)
      \xrightarrowdbl{\quad E\quad}
      Z_{\downarrow}\left(A^n_{\theta}\right)
    \end{equation*}
    restricts to another such (also $E$) on the intermediate $Z\left(A^n_{\theta}\right)$, compatible with the inclusions of the subgroups generated by $t_i$, $i\in F$ for $F\subseteq [n]$. 

    We then have a finite-index 
    \begin{equation*}
      \mathrm{Cont}
      \left(
        \bS^{n-1}_{+}
        \xrightarrow{\quad}
        Z\left(A^n_{\theta}\right)
      \right)
      \xrightarrowdbl{\quad E\circ\quad}
      \mathrm{Cont}
      \left(
        \bS^{n-1}_{+}
        \xrightarrow{\quad}
        Z_{\downarrow}\left(A^n_{\theta}\right)
      \right),
    \end{equation*}
    hence also (because of the noted $F$-compatibility) an analogous expectation between $\mathrm{Cont}_{\partial}$ function algebras. These, however, are exactly the two extremes of \Cref{eq:s2x}, whence the conclusion by \cite[Theorem 1.4]{bg_cx-exp}.

  \item\textbf{\Cref{item:th:s2n1thetapi:s2n1-ispi}} \Cref{eq:sph2tor} and \Cref{pr:isprojfl} ensure that the irreducible $C_{\theta}$-representations are all at most $\sqrt{h_{\theta}}$-dimensional. That estimate, moreover, is the best possible (in the language of \cite[Definition IV.1.4.1]{blk}, $C_{\theta}$ is sharply \emph{$\sqrt{h_{\theta}}$-subhomogeneous}). To verify this last sharpness assertion, quotient out the ideal $I_p:=I_{\{p\}}$ of \Cref{eq:iy.ideal} in \Cref{eq:sph2tor} to obtain a surjection
    \begin{equation*}
      C_\theta
      \xrightarrowdbl{\quad}
      A^n_{\theta}
      \xrightarrowdbl{\quad\text{\Cref{pr:isprojfl}}\quad}
      M_{\sqrt{h_{\theta}}}.
    \end{equation*}
    $C_{\theta}$ thus embeds \cite[\S 2.7.3]{dixc} into a product of matrix algebras $M_n$, $n\le \sqrt{h_{\theta}}$ with at least one $M_{\sqrt{h_{\theta}}}$ quotient, and the conclusion follows.
  \end{enumerate}
\end{proof}

In particular:

\begin{corollary}\label{cor:azumaya.rare}
  A non-commutative sphere algebra $C_{\theta}$ is Azumaya if and only if it is commutative, i.e. $\theta\in M_n(\bZ)$.
\end{corollary}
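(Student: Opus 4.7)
The ``if'' direction is immediate and I would dispatch it first: when $\theta \in M_n(\bZ)$ the scalars $e(\theta_{jk})$ all equal $1$, so the defining relations of $C_\theta = C(\bS^{2n-1}_\theta)$ collapse to those of the commutative algebra $C(\bS^{2n-1})$, and any commutative $C^*$-algebra is trivially rank-$1$ Azumaya over itself.

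For the converse, the plan is to leverage part~(3) of \Cref{th:s2n1thetapi}, which identifies the PI-degree of $C_\theta$ as $\sqrt{h_\theta}$. The standard feature of Azumaya algebras that I will exploit is that every irreducible representation of a rank-$r^2$ Azumaya algebra has dimension exactly $r$: for any irreducible representation the center must act by scalars via some character $Z_\theta \twoheadrightarrow Z_\theta/\fm \cong \bC$, and the Azumaya property then forces the fiber $C_\theta/\fm C_\theta$ to be $M_r(\bC)$, whose unique irreducible module is $r$-dimensional. It therefore suffices to exhibit a single one-dimensional irreducible representation of $C_\theta$: such an object forces $\sqrt{h_\theta} = 1$, hence $h_\theta = 1$, which by \Cref{le:whatish} translates to $\theta(\bZ^n) \subseteq \bZ^n$, i.e.\ $\theta \in M_n(\bZ)$.

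To manufacture such a character I would fix any vertex $p = e_i$ of the simplex $\bS^{n-1}_+$; the realization \Cref{eq:sph2tor} then yields a surjection $C_\theta \twoheadrightarrow C^*(u_i) \cong C(\bS^1)$ onto a commutative $C^*$-algebra, and composition with any point evaluation $C(\bS^1) \to \bC$ produces the desired one-dimensional (hence automatically irreducible) $*$-representation of $C_\theta$. I do not anticipate any real obstacle; the only non-elementary ingredient is the fiber-constancy property of Azumaya algebras, which is essentially built into the definition via the faithfully-flat trivialization $A \otimes_R R' \cong M_r(R')$ recalled just before \Cref{re:az.scl.ext}.
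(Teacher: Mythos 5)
Your proposal is correct and follows essentially the same route as the paper: both arguments hinge on the abelian fiber at a vertex of the simplex $\bS^{n-1}_+$ (giving a one-dimensional irreducible representation) together with the fact that an Azumaya algebra has fibers of constant matrix size, which (via \Cref{th:s2n1thetapi}\Cref{item:th:s2n1thetapi:s2n1-ispi} and \Cref{le:whatish}) forces $h_{\theta}=1$, i.e. $\theta\in M_n(\bZ)$. The paper phrases this as a direct clash between the $M_{\sqrt{h_{\theta}}}$ quotient and the commutative vertex fiber rather than routing through the PI-degree, but the content is identical.
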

\begin{proof}
  This is immediate from \Cref{th:s2n1thetapi} (and its proof): $\theta\not\in M_n(\bZ)$ is equivalent to $h_{\theta}>1$, and it remains to observe that if that condition holds then $C_{\theta}$ cannot be Azumaya, for on the one hand it has an $h_{\theta}$-dimensional matrix algebra quotient, while on the other hand $C_{\theta}|_p$ is abelian for any of the $n$ vertices $p\in \bS_+^{n-1}$. 
\end{proof}

The branch-covering qualification in \Cref{th:s2n1thetapi}\Cref{item:th:s2n1thetapi:brnch.cvr.sph} is not redundant: as soon as $n\ge 3$ the center $Z_{\theta}$ of \Cref{eq:cents2n1theta} need not be (the function algebra of) a sphere. 

\begin{example}\label{ex:s5.non.sph}
  Set
  \begin{equation*}
    \theta
    :=
    \left(
      \begin{array}{rrr}
        0 & 1/2 & 1/2  \\
        -1/2 & 0 &  1/2 \\
        -1/2 & -1/2 & 0
      \end{array}
    \right)
    \in
    M_3(\bQ).
  \end{equation*}
  Denoting by $u_i$, $i\in [3]$ the generators of $A^3_{\theta}$, the sphere $\bS^{2n-1}=\bS^5$ of \Cref{th:s2n1thetapi}\Cref{item:th:s2n1thetapi:brnch.cvr.sph} is
  \begin{equation*}
    \bS^5
    \cong
    \Max \mathrm{Cont}_{\partial}
    \left(
      \bS_+^2
      \xrightarrow{\quad}
      C^*\left(u_i^2,\ i\in [3]\right)
    \right)
  \end{equation*}
  while
  \begin{equation*}
    X_{\theta}
    \cong
    \Max \mathrm{Cont}_{\partial}
    \left(
      \bS_+^2
      \xrightarrow{\quad}
      Z\left(A^3_{\theta}\right)
      =
      C^*\left(u_i^2,\ i\in [3],\ u_1 u_2 u_3\right)
    \right).
  \end{equation*}
  The latter space thus consists of two 5-spheres glued along the 3-dimensional complex
  \begin{equation}\label{eq:3sph.chn}
    \Max \mathrm{Cont}_{\partial}
    \left(
      \partial \bS_+^2
      \xrightarrow{\quad}
      Z\left(A^3_{\theta}\right)
    \right)
  \end{equation}
  (one copy of which is embedded in each of the two 5-spheres). Said complex can be described as follows:
  \begin{itemize}[wide]
  \item consider three 3-spheres $\bS^3_i$ indexed by $i\in \bZ/3$, regarded as total spaces of the \emph{Hopf fibration} \cite[\S 14.1.9]{td_alg-top} $\bS^3\xrightarrowdbl{\pi} \bS^2$;

  \item in each $\bS^3_i$ set $\bS^1_{i,\pm}:=\pi^{-1}\left(p_{\pm}\right)$ for antipodes $p_{\pm}\in \bS^2$;

  \item and glue $\bS^3_i$ to $\bS^3_{i+1}$ (indices modulo 3) by identifying $\bS^1_{i,+}\cong \bS^1_{i+1,-}$ to obtain \Cref{eq:3sph.chn}. 
  \end{itemize}
  $X_{\theta}$ is now easily seen not to be homeomorphic to $\bS^5$, and indeed, not a topological manifold: points in $\bS^3_0$ but off the exceptional circles $\bS^1_{0,\pm}$ have neighborhoods homeomorphic to two copies of $\bR^5$ glued along a closed $\bR^3$. The removal of that $\bR^3$ disconnects such a neighborhood, so it cannot \cite[Theorem 1.8.13]{eng_dim} be homeomorphic to a Euclidean space. 
\end{example}

\begin{remark}\label{re:cntr.not.inv}
  \Cref{ex:s5.non.sph} illustrates a qualitative distinction between quantum tori and spheres: while \emph{equivalent} $\theta,\theta'\in M_n(\bQ)$, in the sense \cite[\S IX.1.6]{bourb_alg-09} (appropriate for bilinear forms) that
  \begin{equation*}
    \exists\left(T\in \mathrm{GL}_n(\bZ)\right)
    \quad:\quad
    \theta'=T\theta T^t,
  \end{equation*}
  will produce isomorphic torus algebras $A^n_{\theta}\cong A^n_{\theta'}$, it may well be that $C_{\theta}\not\cong C_{\theta'}$. In fact, not even the centers of the latter two algebras need be isomorphic.

  Indeed, $Z_{\theta}$ will be (the function algebra of) a $(2n-1)$-sphere whenever $\theta$ is block-diagonal with blocks \Cref{eq:ddmat}, for in that case, in the notation of \Cref{th:s2n1thetapi},
  \begin{equation*}
    Z\left(A^n_{\theta}\right)
    =
    C^*\left(u_i^{q_i},\ i\in [n]\right).
  \end{equation*}
  Per \Cref{ex:s5.non.sph}, then, with that choice of $\theta$ and
  \begin{equation*}
    \theta':=
    \left(
      \begin{array}{rrr}
        0 & 1/2 & 0  \\
        -1/2 & 0 & 0 \\
        0 & 0 & 0
      \end{array}
    \right)
    =
    T\theta T^t
    ,\quad
    T:=
    \left(
      \begin{array}{rrr}
        1 & 0 & 0  \\
        0 & 1 & 0 \\
        1 & -1 & 1
      \end{array}
    \right)    
  \end{equation*}
  The respective centers $Z_{\theta,\theta'}=Z\left(C_{\theta,\theta'}\right)$ are non-isomorphic. 
\end{remark}

To follow up on \Cref{cor:azumaya.rare}, determining the Azumaya locus of $C_{\theta}$ is now also. We again need some vocabulary and notation preliminaries.


\begin{definition}\label{def:jmp.cplx}
  Let $\theta\in M_n(\bQ)$ be a rational skew-symmetric matrix and recall \Cref{not:tuple.pows}. 

  The \emph{jump (sub)complex} $\cat{jmp}_{\theta}$ of $\theta$ (just plain $\cat{jmp}$ when $\theta$ is understood) is
  \begin{equation*}
    \cat{jmp}
    =
    \cat{jmp}_{\theta}
    :=
    \left\{F\in 2^{[n]}\ :\ h_{\theta,F}<h_{\theta}\right\}.
  \end{equation*}
  Identifying \emph{simplicial complexes} and their respective \emph{geometric realizations} \cite[\S 8.1]{td_alg-top}, $\cat{jmp}_{\theta}$ is a \emph{subcomplex} (\cite[\S 8.1]{td_alg-top}, \cite[\S 3.1]{spa_at}) of the simplex $\Delta^{n-1}\cong \bS_+^{n-1}$ of \Cref{eq:bdry.cond} with vertex-set $[n]$; this justifies the term. 
\end{definition}

\begin{theorem}\label{th:ctheta.azumaya.loc}  
  The Azumaya locus of $C_{\theta}$ is the complement
  \begin{equation}\label{eq:minus.jmp}
    \left(X_{\theta}= \Max(Z_{\theta})\right)
    \setminus
    \Max\left(Z_{\theta}|_{\cat{jmp}}\right)
  \end{equation}
  in the notation of \Cref{eq:ct.zt.res}.
\end{theorem}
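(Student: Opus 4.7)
The plan is to determine the Azumaya locus by computing, for each $\fm \in X_\theta$, the dimension of the associated simple $C_\theta$-module and comparing it with the PI-degree $\sqrt{h_\theta}$ established in \Cref{th:s2n1thetapi}\Cref{item:th:s2n1thetapi:s2n1-ispi}. The relevant general fact (\cite[\S III.1.7]{bg_algqg}) is that for a PI algebra with center $Z$, an element of $\Max(Z)$ lies in the Azumaya locus exactly when its associated simple quotient is a matrix algebra whose size equals the PI-degree.

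The key geometric input is the identification, for $p$ in the relative interior of the face $\bS^{n-1}_{+,F}\subseteq \bS^{n-1}_+$, of the fiber
\begin{equation*}
  C_\theta|_p\;:=\;C_\theta/I_p C_\theta\;\cong\; A^F_{\theta|_F}
\end{equation*}
with the non-commutative sub-torus generated by $\{u_i\mid i\in F\}$: this comes from \Cref{eq:sph2tor,eq:bdry.cond}, the boundary condition $f(\bS^{n-1}_{+,F})\subseteq C^*(u_i,\ i\in F)$ providing the inclusion $C_\theta|_p\hookrightarrow A^F_{\theta|_F}$, and evaluation at an interior point (where no further constraints arise) providing surjectivity. \Cref{pr:isprojfl} applied to $\theta|_F$ then identifies $A^F_{\theta|_F}$ as Azumaya of rank $h_{\theta,F}$ over its center, whence every simple $C_\theta|_p$-module has dimension $\sqrt{h_{\theta,F}}$.

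Combining: for $\fm\in X_\theta$ lying over such a $p$, the simple quotient of $C_\theta$ at $\fm$ factors through $C_\theta|_p$ and so has dimension $\sqrt{h_{\theta,F}}$, which matches $\sqrt{h_\theta}$ iff $F\notin\cat{jmp}_\theta$. Thus $\fm$ lies in the Azumaya locus iff its image in $\bS^{n-1}_+$ under the natural map $X_\theta\to\bS^{n-1}_+$ (dual to the inclusion $C(\bS^{n-1}_+)\hookrightarrow Z_\theta$) avoids the geometric realization $|\cat{jmp}_\theta|=\bigcup_{F\in\cat{jmp}_\theta}\bS^{n-1}_{+,F}$; since $\Max(Z_\theta|_{\cat{jmp}})\subseteq X_\theta$ is by \Cref{eq:ct.zt.res} precisely the preimage of $|\cat{jmp}_\theta|$, this gives the stated equality \Cref{eq:minus.jmp}. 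The main technical step is the fiber identification (confirming that no extraneous relations appear upon restriction to an interior point of a face), together with the preliminary observation that $\cat{jmp}_\theta$ is indeed a subcomplex of $\Delta^{n-1}$---i.e.\ that $h_{\theta,F'}\le h_{\theta,F}$ for $F'\subseteq F$, which follows by projecting $\bZ^n+\im\theta$ onto the $F'$-coordinates and comparing indices.
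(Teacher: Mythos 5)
Your fiber identification and the combinatorial observations (including that $\cat{jmp}_\theta$ is a subcomplex) are correct and do match the computations in the paper's proof, but the argument hinges on a ``general fact'' that does not apply here, and that gap is not cosmetic. The characterization from \cite[\S III.1.7]{bg_algqg} that you invoke---that $\fm\in\Max(Z)$ is in the Azumaya locus precisely when the corresponding simple quotient has dimension equal to the PI-degree---is proved there under the hypotheses that $A$ is prime, Noetherian, and module-finite over an affine center. None of these hold for $C_\theta$: it is visibly non-prime (it has abelian quotients at the vertices of $\bS_+^{n-1}$), it is not Noetherian, and by \Cref{th:ct.tfg.not.fg}\Cref{item:th:ct.tfg.not.fg:non.fg} it fails to be finitely generated over $Z_\theta$ unless $\theta$ is integral. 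So the cited result cannot simply be quoted; both inclusions need to be established directly.

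The inclusion that most urgently needs repair is the ``positive'' one, that points lying over $\bS_+^{n-1}\setminus\cat{jmp}$ are in the Azumaya locus. What the paper does instead is note that such a maximal ideal $\fm$ lies in the interior of $\Max(Z_\theta|_Y)$ for a closed neighborhood $Y\subseteq\bS_+^{n-1}\setminus\cat{jmp}$; over such $Y$ the restricted algebra $C_\theta|_Y$ is manifestly Azumaya over $Z_\theta|_Y$, being $\End(\pi^*\cE)$ for the flat bundle $\cE\to\bT^n$ of \Cref{pr:isprojfl} pulled back along the second projection $Y\times\bT^n\to\bT^n$, and the localization at $\fm$ then coincides with a localization of this Azumaya restriction (the ideal $I_Y$ is annihilated by a central function supported in $Y$ and non-vanishing at $\fm$). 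For the reverse inclusion the paper likewise avoids Brown--Goodearl: it uses the concrete Artin--Procesi-type characterization of rank-$n^2$ Azumaya (\cite[Theorem 10.3.2]{agpr_pi}) together with the explicit description $C_\theta/\fm C_\theta\cong\bigl(M_{\sqrt{h_{\theta,F}}}\bigr)^{k}$, which in the jump locus has a quotient satisfying the identities of some $M_j$ with $j<\sqrt{h_\theta}$. In short: your structural analysis of the fibers is right, but the Azumaya-theoretic input has to be supplied by hand rather than imported from the prime-Noetherian-finite setting.
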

\begin{proof}  
  \begin{enumerate}[(I),wide]

  \item\textbf{: \Cref{eq:minus.jmp} $\supseteq$ Azumaya locus.} Suppose $p\in \Max\left(Z_{\theta}\right)$ belongs precisely to the subspaces
    \begin{equation*}
      X_{\theta,\widehat{i}}
      :=
      \Max\left(Z_{\theta}|_{\bS^{n-1}_{+,\widehat{i}}}\right)
      \subseteq
      X_{\theta}
      ,\quad
      i\in F\in 2^{[n]}.
    \end{equation*}
    The intersection
    \begin{equation*}
      p\cap C\left(\bS_+^{n-1}\right)
      \in
      \Max C\left(\bS_+^{n-1}\right)
      \cong
      \bS_+^{n-1}
    \end{equation*}
    will then belong to the interior of the face $\bS_{+,\widehat{F}}^{n-1}$, and \Cref{eq:sph2tor} specializes to
    \begin{equation*}
      C_{\theta}/\left(C_{\theta}\cdot \left(p\cap C\left(\bS_+^{n-1}\right)\right)\right)
      \cong
      C^*\left(u_i,\ i\in \widehat{F}\right)
      \cong
      A^{\left|\widehat{F}\right|}_{\theta|_F}
      \subseteq
      A^n_{\theta}. 
    \end{equation*}
    If $\widehat{F}\in \cat{jmp}$ then (by the very definition of the jump complex) this further specializes, at every maximal ideal of the center $Z\left(A^n_{\theta}\right)\cong C(\bT^n)$, to a \emph{proper} inclusion
    \begin{equation*}
      C_{\theta}/\left(C_{\theta}\cdot p\right)
      \quad      
      \cong
      \quad
      \left(M_{\sqrt{h_{\theta,\widehat{F}}}}\right)
      ^
      {\left[\left(\theta|_{\widehat{F}}\right)^{\perp}:\theta^{\perp}_{\downarrow \widehat{F}}\right]}
      \quad
      \lneq
      \quad
      M_{\sqrt{h_{\theta}}}
    \end{equation*}
    of a non-matrix algebra, with $\left(\theta|_{\widehat{F}}\right)^{\perp}\le \bZ^{\left|\widehat{F}\right|}$ regarded as a subgroup of $\bZ^n$ by padding vectors with zero entries. $C_{\theta}$, then, cannot be Azumaya at $p$, for its localization $\left(C_{\theta}\right)_p$ does not satisfy the polynomial identities of any $M_n$, $n<\sqrt{h_{\theta}}$.
    
  \item\textbf{: \Cref{eq:minus.jmp} $\subseteq$ Azumaya locus.} Every maximal ideal $p\in \text{\Cref{eq:minus.jmp}}$ is contained in the interior of some
    \begin{equation*}
      \Max\left(Z_{\theta}|_Y\right)
      \subset
      \Max\left(Z_{\theta}\right)
      ,\quad
      Y=\overline{Y}
      \subseteq
      \bS_+^{n-1}\setminus \cat{jmp}:
    \end{equation*}
    the intersection $p\cap C\left(\bS_+^{n-1}\right)$ belongs to the latter open set, and it suffices to take for $Y$ a closed neighborhood of that point still contained in that open. The conclusion now follows from the fact that for such $Y$ the algebras $C_{\theta}|_Y$ are Azumaya:
    \begin{equation*}
      C_{\theta}|Y
      \cong
      \End(\pi^*\cE)
      ,\quad
      Y\times \bT^n
      \xrightarrowdbl[\quad]{\quad\pi:=\text{$2^{nd}$ projection}\quad}
      \bT^n\cong \Max\left(Z\left(A^n_{\theta}\right)\right).
    \end{equation*}
  \end{enumerate}
  This concludes the proof of the theorem. 
\end{proof}

\section{On and around center-finiteness}\label{se:cent.fin}

Finiteness (i.e. being a finitely-generated module) over the center is well-known not to be automatic for PI algebras: \cite[Example post Proposition 5.1.3]{proc_pi}, say, is of a finitely-generated algebra satisfying the identities of $M_4$ but not embeddable in a ring finite over its center. The non-commutative sphere algebras $C_{\theta}$ themselves, as will become apparent, are another case in point. 

Subhomogeneous $C^*$-algebras $A$ (such as $C_{\theta}$) admit (unital) embeddings
\begin{equation*}
  A
  \le
  M_n^I
  \cong
  M_n\left(C(\beta I)\right)
  ,\quad
  \beta(-)
  :=
  \text{\emph{Stone-\v{C}ech compactification} \cite[\S 6.5]{gj_rings} of $I$},
\end{equation*}
as noted in the proof of \Cref{th:s2n1thetapi} (essentially by \cite[\S 2.7.3]{dixc}).
This though, will also not (generally) suffice to ensure center-finiteness, per \Cref{ex:not.cent.fin}: the latter will indeed not even be \emph{topologically} center-finite, in the sense of containing a (norm-topology-)dense module over its center. Naturally, rings sandwiched as $R\le A\le M_n(R)$ for \emph{Noetherian} commutative $R$ are center-finite, so \Cref{ex:not.cent.fin} is in a sense a manifestation of the non-Noetherianness of the continuous-function algebras $C(X)$ involved. 


\begin{example}\label{ex:not.cent.fin}
  \cite[Example 3.5]{zbMATH05172034} (also \cite[Example 3.6]{bg_cx-exp}) provides a $C^*$-algebra $A$ equipped with a central morphism $C(X)\to A$ (a $C(X)$-algebra) for
  \begin{equation*}
    X
    :=
    \left(X_0:=\bigsqcup_{n\ge 1}\bC\bP^n\right)^+
    :=
    \quad
    \text{\emph{one-point compactification} \cite[Problem 19A]{wil_top} of $X_0$}
  \end{equation*}
  with fiber $\bC$ at the distinguished point $\infty\in X$ and fibers $M_2(\bC)$ over $X_0$. Because the associated $M_2$-bundle over $X$ is by construction not \emph{of finite type} (i.e. \cite[Definition 3.5.7]{hus_fib} trivializable over a finite open cover of $X_0$), $A$ cannot be topologically finitely-generated as a $C(X)$-module by \cite[Theorem 1.1]{gog_top-fg} (or \cite[Theorem A]{MR4936343}) and \cite[Proposition 3.7]{bg_cx-exp}.
\end{example}

\begin{remarks}\label{res:inf.g.ind}
  \begin{enumerate}[(1),wide]
  \item\label{item:res:inf.g.ind:yns} \Cref{ex:not.cent.fin} is one instance of the following general setup.
    \begin{itemize}[wide]
    \item Consider compact Hausdorff spaces $Y_n$ respectively equipped with (complex) vector $d$-bundles $\cE_n$; we abuse notation and conflate these \cite[Remark 15.1.2]{hjjm_bdle} with their corresponding principal $U(d)$-bundles. 

    \item Assume the attached set
      \begin{equation*}
        \left\{\mathrm{ind}_{U(d)}\left(\cE_n\right)\right\}_n
        \subseteq
        \bZ_{\ge 0}
      \end{equation*}
      of \emph{$U(d)$-indices} \cite[Definition 6.2.3]{mat_bu} is unbounded: there is no finite upper bound on the cardinality of an open cover of $Y_n$ that will trivialize $\cE_n$.

    \item The $\cE_n$ glue to a vector $d$-bundle over $X_0:=\bigsqcup_n Y_n$.

    \item Form the bundle $\cF:=\cE\oplus \left(X_0\times \bC\right)$ (i.e. add a trivial 1-dimensional summand to $\cE\xrightarrowdbl{} X_0$).

    \item Construct the corresponding endomorphism bundle $\cF\otimes \cF^*$ (a $(d+1)\times (d+1)$-matrix bundle over $X_0$).

    \item And finally, take for the $C^*$-algebra $A$ (supposed to play the same role as in \Cref{ex:not.cent.fin}) the \emph{unitization} \cite[\S II.1.2.1]{blk} $\Gamma_0\left(\cF\otimes \cF^*\right)^+$, the `0' subscript indicating sections on $X_0$ vanishing at $\infty$. 
    \end{itemize}

  \item\label{item:res:inf.g.ind:inf.dim} For path-connected (compact Hausdorff) $Y_n$ in \Cref{item:res:inf.g.ind:yns} above the unboundedness
    \begin{equation*}
      \sup_n \dim Y_n
      ,\quad
      \dim:=\text{\emph{covering dimension} \cite[Definition 1.6.7]{eng_dim}}
    \end{equation*}
    is an essential feature of this family of examples: per \cite[Proposition 2.1]{zbMATH03635101} (or as an immediate consequence of \cite[Lemma 2.4]{zbMATH03224293}, say) for any \emph{paracompact} \cite[\S 5.1]{eng_top_1989} path-connected space $Y$ there is an open cover
    \begin{equation*}
      Y=\bigcup_{i=0}^{\dim Y}U_i
      ,\quad
      U_i\text{ contractible in }Y.
    \end{equation*}
    The restriction of a bundle on $Y$ will be trivializable \cite[Theorem 14.3.3]{td_alg-top} over every $U_i$, rendering \Cref{item:res:inf.g.ind:yns} inoperative. 
  \end{enumerate}
\end{remarks}

Returning to $C_{\theta}$: while (mostly) not center-finite, they nevertheless exhibit less pathology in that regard than \Cref{ex:not.cent.fin} and the like. 

\begin{theorem}\label{th:ct.tfg.not.fg}
  Let $\theta\in M_n(\bQ)$ be a skew-symmetric matrix for some $n\in \bZ_{\ge 2}$. 
  \begin{enumerate}[(1),wide]
  \item\label{item:th:ct.tfg.not.fg:non.fg} The quantum sphere $C^*$-algebra $C_{\theta}$ is center-finite precisely when it is commutative, i.e. $\theta\in M_n(\bZ)$.

  \item\label{item:th:ct.tfg.not.fg:top.fg} On the other hand, $C_{\theta}$ is always topologically center-finite. 
  \end{enumerate}
\end{theorem}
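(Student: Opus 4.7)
My plan is to treat the two parts separately: Part \Cref{item:th:ct.tfg.not.fg:top.fg} reduces to a general topological-finite-generation criterion applied to the finite-dimensional base $X_\theta$, while Part \Cref{item:th:ct.tfg.not.fg:non.fg}, the harder direction, demands a dimension-drop obstruction at a vertex of $\bS_+^{n-1}$ modelled on the standard failure of finite generation of the maximal ideal $\{h\in C[0,1]:h(0)=0\}$ over $C[0,1]$.

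For Part \Cref{item:th:ct.tfg.not.fg:top.fg} I would invoke the equivalence between topological finite generation of $C_\theta$ over $Z_\theta$ and the associated $C^*$-bundle over $X_\theta:=\Max(Z_\theta)$ being of finite type, furnished by \cite[Theorem 1.1]{gog_top-fg} (or \cite[Theorem A]{MR4936343}) together with \cite[Proposition 3.7]{bg_cx-exp}, applied in the direction opposite to \Cref{ex:not.cent.fin}. By \Cref{th:s2n1thetapi}\Cref{item:th:s2n1thetapi:brnch.cvr.sph}, $X_\theta$ is a branched cover of $\bS^{2n-1}$, hence compact Hausdorff of covering dimension at most $2n-1$, and \Cref{th:s2n1thetapi}\Cref{item:th:s2n1thetapi:s2n1-ispi} uniformly bounds the fibre dimensions by $h_\theta$; finite-dimensionality then supplies a cover of $X_\theta$ by $2n$ subsets each contractible in $X_\theta$ (per \Cref{res:inf.g.ind}\Cref{item:res:inf.g.ind:inf.dim}), over each of which the bundle trivializes, delivering the finite-type hypothesis.

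For Part \Cref{item:th:ct.tfg.not.fg:non.fg}, $\theta\in M_n(\bZ)$ gives $e(\theta_{jk})\equiv 1$ and hence $C_\theta=Z_\theta$, trivially module-finite over itself. Conversely, I would suppose $\theta\notin M_n(\bZ)$, fix $i\ne j$ with $\theta_{ij}\notin\bZ$ (available by skew-symmetry), and derive a contradiction from $C_\theta=\sum_{\ell=1}^N Z_\theta\cdot f_\ell$. Choose coset representatives $R\subset\bZ^n/\theta^\perp$ containing $\mathbf{e}_j$ (permissible since $\theta \mathbf{e}_j\notin\bZ^n$), and expand each generator via the free $Z(A^n_\theta)$-basis of \Cref{eq:rat.tor.cent} as $f_\ell(p)=\sum_{\mathbf{m}\in R}c_{\ell,\mathbf{m}}(p)\mathbf{u}^{\mathbf{m}}$ with continuous $c_{\ell,\mathbf{m}}\colon\bS_+^{n-1}\to Z(A^n_\theta)\cong C(\bT^n)$. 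The boundary condition \Cref{eq:bdry.cond} forces $f_\ell(e_i)\in C^*(u_i)$, and the arithmetic fact that $\theta_{ij}\notin\bZ$ prevents $\mathbf{e}_j\equiv k\mathbf{e}_i\pmod{\theta^\perp}$ for any $k\in\bZ$ (otherwise the $i$-th coordinate of $\theta(\mathbf{e}_j-k\mathbf{e}_i)\in\bZ^n$ would equal $\theta_{ij}\in\bZ$, contradicting the choice); consequently the projection onto $\mathbf{u}^{\mathbf{e}_j}$ annihilates $C^*(u_i)$, forcing $c_{\ell,\mathbf{e}_j}(e_i)=0$ for every $\ell$, so that $B(p):=\max_\ell\|c_{\ell,\mathbf{e}_j}(p)\|$ is a continuous non-negative function on $\bS_+^{n-1}$ with $B(e_i)=0$. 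Now for every continuous $\phi\colon[0,1]\to\bC$ with $\phi(0)=0$ the section $g_\phi(p):=\phi(s_j(p))u_j$ lies in $C_\theta$ (the boundary checks on $\bS_{+,F}^{n-1}$ with $j\notin F$ hold because $s_j\equiv 0$ there), and projecting a putative identity $g_\phi=\sum_\ell z_\ell^{(\phi)}f_\ell$ onto the $\mathbf{u}^{\mathbf{e}_j}$-component in $C(\bT^n)$ yields
\begin{equation*}
  |\phi(s_j(p))|\le N\cdot\max_\ell\|z_\ell^{(\phi)}\|_{Z_\theta}\cdot B(p).
\end{equation*}
The contradiction is the diagonal trick: along a sequence $p_k\to e_i$ in $\bS_{+,\{i,j\}}^{n-1}$ with $s_j(p_k)\searrow 0$ strictly and $B(p_k)<2^{-k^2}$, interpolate $\phi(s_j(p_k)):=2^{-k}$ piecewise linearly with $\phi(0)=0$; then $\phi(s_j(p_k))/B(p_k)\ge 2^{k^2-k}\to\infty$ defeats every finite value of $N\cdot\max_\ell\|z_\ell^{(\phi)}\|_{Z_\theta}$. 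The delicate step is the arithmetic vanishing $c_{\ell,\mathbf{e}_j}(e_i)=0$, which hinges on careful bookkeeping of how the representative $\mathbf{e}_j\in R$ interacts with the cosets of powers of $u_i$ and reduces cleanly to $\theta_{ij}\notin\bZ$; once it is in hand, the analytic estimates and the diagonal construction of $\phi$ are standard.
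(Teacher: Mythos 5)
Your two parts call for separate verdicts.

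For part \Cref{item:th:ct.tfg.not.fg:non.fg}, your argument is a genuinely different route from the paper's. The paper uses an abstract chain: subhomogeneity makes $C_\theta$ a $Z_\theta$-Hilbert module \cite[Theorem A]{2409.03531v1}, finite generation would force projectivity \cite[Corollary 15.4.8]{wo}, Swan's theorem turns the attached (F) Hilbert bundle into a locally trivial one of constant rank over a connected base, and \Cref{cor:azumaya.rare} rules that out unless $\theta$ is integral. Your version is a hands-on obstruction: Fourier-extract the $\mathbf{u}^{\mathbf{e}_j}$-coefficient (valid since the expectation $E_{\mathbf{m}}=E_0\left(\,\cdot\,(\mathbf{u}^{\mathbf{m}})^*\right)$ is a contractive $Z(A^n_\theta)$-module map), observe that $\theta_{ij}\notin\bZ$ together with $\theta_{ii}=0$ forbids $\mathbf{e}_j\equiv k\mathbf{e}_i\pmod{\theta^\perp}$, so the coefficients $c_{\ell,\mathbf{e}_j}$ all vanish at the vertex $e_i$, and then run a diagonal argument against any putative finite generating set using test sections $g_\phi$. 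The arithmetic step and the analytic estimate both check out, and the construction of $\phi$ is legitimate because the interpolated function is monotone and extends by zero. What your approach buys is independence from the Hilbert-module machinery and an explicit witness to non-generation; what it costs is a longer ad hoc computation. Both work.

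For part \Cref{item:th:ct.tfg.not.fg:top.fg} there is a genuine gap. You claim that covering $X_\theta$ by finitely many subsets contractible in $X_\theta$ trivializes the bundle over each and thereby delivers finite type. But the (F) Banach bundle $\cE\to X_\theta$ whose section algebra is $C_\theta$ is \emph{not} locally trivial: the fibre dimension jumps across the pieces $\Max\left(Z_\theta|_{\bS^{n-1}_{+,F}}\right)$ (from $1$ at the vertices to $h_\theta$ at interior points, via the intermediate $h_{\theta,F}$). A contractible subset of $X_\theta$ meeting points of different fibre dimension admits no trivialization whatever, and the homotopy-classification argument underlying the ``contractible $\Rightarrow$ trivial'' step \cite[Theorem 14.3.3]{td_alg-top} only applies to locally trivial bundles. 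This is precisely why the paper stratifies: it partitions $X_\theta$ into the level sets $X_d$ of the fibre-dimension function, each a finite union of paracompact path-connected finite-dimensional pieces, and notes that each restriction $\cE|_{X_d}$ \emph{is} locally trivial of constant rank $d$, so that a finite contractible cover of $X_d$ trivializes $\cE|_{X_d}$; the finite-type hypothesis of \cite[Theorem 1.1]{gog_top-fg} is then verified stratum by stratum. Without the stratification your argument does not go through.
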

\begin{proof}  
  \begin{enumerate}[label={},wide]
  \item\textbf{\Cref{item:th:ct.tfg.not.fg:non.fg}} \Cref{th:s2n1thetapi} makes $C_{\theta}$ into a subhomogeneous $Z_{\theta}$-algebra, which can thus \cite[Theorem A]{2409.03531v1} be regarded as a $Z_{\theta}$-\emph{Hilbert module} \cite[Definition 15.1.5]{wo}. Finite generation would imply \cite[Corollary 15.4.8]{wo} that $C_{\theta}$ is also \emph{projective} \cite[Definition 5.3.1]{agpr_pi}.

    The Hilbert-module-to-Hilbert-bundle correspondence of \cite[Scholium 6.7]{hk_shv-bdl} and Swan's celebrated \cite[Theorem 1.6.3]{ros_algk} (originally \cite[Theorems 1 and 2]{zbMATH03179258}) then imply that the \emph{(F) Hilbert bundle} \cite[p.9]{dg_ban-bdl} over $X:=\Max(Z_{\theta})$ with fibers
    \begin{equation*}
      X
      \ni p
      \xmapsto{\quad}
      C_{\theta}|_p
    \end{equation*}
    is locally trivial, so (the sphere being connected) of constant rank. This, in turn, is equivalent (\Cref{cor:azumaya.rare}) to the commutativity of $C_{\theta}$. 
    
  \item\textbf{\Cref{item:th:ct.tfg.not.fg:top.fg}} Once more regarding $C_{\theta}$, via \Cref{th:s2n1thetapi}\Cref{item:th:s2n1thetapi:s2n1-fincent}, as the section-space $\Gamma(\cE)$ of a subhomogeneous (F) Banach bundle $\cE\xrightarrowdbl{}\bS^{2n-1}\cong \Max(Z_{\theta})$, observe that the \emph{strata}
    \begin{equation*}
      X_d
      :=
      \left\{p\in X\ :\ \dim\text{fiber }\cE_p=\dim C_{\theta}/C_{\theta}\cdot p=d\right\}
    \end{equation*}
    are all members of the \emph{set ring} \cite[I, Definition 1.2.13]{bog_meas-1-2}  generated by the spaces
    \begin{equation*}
      \Max\left(Z_{\theta}|_{\bS^{n-1}_{+,F}}\right)
      \subseteq
      \Max\left(Z_{\theta}\right)
      =
      X
      ,\quad
      F\in 2^{[n]}.
    \end{equation*}
    They are all in particular finite unions of path-connected paracompact spaces of finite (covering) dimension, so \cite[Proposition 2.1]{zbMATH03635101} admit respective covers
    \begin{equation*}
      X_d
      =
      \bigcup_{j=0}^{N}U_j
      ,\quad
      U_j=\overset{\circ}{U}_j\text{ contractible in }\bS^{2n-1}_d
      \quad\left(\text{some }N\in \bZ_{\ge 0}\right).
    \end{equation*}
    The restrictions $\cE|_{X_d}$ are thus all trivializable by finite covers, hence topological finite generation via \cite[Theorem 1.1]{gog_top-fg}.
  \end{enumerate}
\end{proof}


\addcontentsline{toc}{section}{References}

\def\polhk#1{\setbox0=\hbox{#1}{\ooalign{\hidewidth
  \lower1.5ex\hbox{`}\hidewidth\crcr\unhbox0}}}
  \def\polhk#1{\setbox0=\hbox{#1}{\ooalign{\hidewidth
  \lower1.5ex\hbox{`}\hidewidth\crcr\unhbox0}}}
  \def\polhk#1{\setbox0=\hbox{#1}{\ooalign{\hidewidth
  \lower1.5ex\hbox{`}\hidewidth\crcr\unhbox0}}}
  \def\polhk#1{\setbox0=\hbox{#1}{\ooalign{\hidewidth
  \lower1.5ex\hbox{`}\hidewidth\crcr\unhbox0}}}
  \def\polhk#1{\setbox0=\hbox{#1}{\ooalign{\hidewidth
  \lower1.5ex\hbox{`}\hidewidth\crcr\unhbox0}}}
  \def\polhk#1{\setbox0=\hbox{#1}{\ooalign{\hidewidth
  \lower1.5ex\hbox{`}\hidewidth\crcr\unhbox0}}}

\Addresses

\end{document}